\documentclass[11pt]{article}
\usepackage[a4paper,margin=1in]{geometry}
\usepackage{amsmath,amssymb,amsthm,mathtools,enumitem}
\usepackage[utf8]{inputenc}
\usepackage{newtxtext,newtxmath}

\usepackage{hyperref}

\newtheorem{theorem}{Theorem}[section]
\newtheorem{proposition}[theorem]{Proposition}
\newtheorem{lemma}[theorem]{Lemma}
\newtheorem{corollary}[theorem]{Corollary}
\theoremstyle{definition}
\newtheorem{definition}[theorem]{Definition}

\newtheorem{problem}[theorem]{Problem}

\newcommand{\ZZ}{\mathbb{Z}}
\newcommand{\CC}{\mathbb{C}}

\newcommand{\Proj}{\mathrm{Proj}}

\DeclareMathOperator{\Span}{span}

\DeclareMathOperator{\Pic}{Pic}

\begin{document}

\title{Picard groups of completed period images and the Deng–Robles problem}
\author{Badre Mounda, Dongzhe Zheng}
\date{\today}
\maketitle

\begin{abstract}
A basic problem in the geometry of degenerating period maps is to determine whether their completed images admit an intrinsic algebraic description. For polarized variations of Hodge structure over smooth quasi-projective surfaces, Deng and Robles formulated such a problem in terms of the Kato--Nakayama--Usui completion of the period image and a conjectural Proj description involving the augmented Hodge line bundle and the boundary divisor on a smooth compactification of the base. We show that the essential obstruction to this description is divisor-theoretic: it may be expressed as a Picard-generation statement on the completed mixed period image. We prove this statement when the pure period image is one-dimensional, and consequently obtain the Deng--Robles Proj description in this case.
\end{abstract}

\section{Introduction}

Period maps encode the variation of Hodge structure in algebraic families, and their behavior near the boundary is governed by limiting mixed Hodge structures and asymptotic data. A basic problem in Hodge theory is whether the image of a degenerating period map admits a natural algebraic completion. Beyond the Hermitian setting, this leads naturally to constructions involving nilpotent orbits, logarithmic structures, and mixed period spaces.

In dimension two, Deng and Robles formulated a precise version of this problem. Let $S$ be a smooth quasi-projective surface, let $\bar S$ be a smooth projective compactification with simple normal crossings boundary, and let $\mathbb V$ be a polarized variation of Hodge structure on $S$. Their work yields a completed image
\[
Y \subset \Gamma \backslash D_{\Sigma},
\]
and asks whether $Y$ admits a Proj description in terms of the augmented Hodge line bundle on $\bar S$ and the boundary divisor.

We show that this problem is governed by the divisor theory of the completed mixed period image. More precisely, we recast the Deng--Robles problem as a Picard-generation statement for $Y$. The essential point is that the difficulty lies not only in the existence of the completed image, but in the structure of the divisor classes it carries.

Our main result treats the case in which the pure period image is one-dimensional. In this setting, the geometry of the completed image becomes sufficiently rigid to allow a complete analysis of its divisor theory. As a consequence, we establish the required Picard-generation statement and thereby verify the Deng--Robles Proj description in this case.

The proof proceeds by relating the divisor theory of $Y$ to the geometry of the induced map from the mixed period image to the pure period image. When the latter is a curve, the horizontal contribution is controlled by the geometry of the base, while the vertical contribution is governed by the geometry of the fibers together with positivity results for mixed period maps. This is where results of Green--Griffiths--Robles and Bakker--Brunebarbe--Tsimerman enter in an essential way.

The paper is organized as follows. In Section~2 we recall the precise formulation of the Deng--Robles problem in the two-dimensional setting. In Section~3 we collect the Hodge-theoretic and algebro-geometric ingredients used in the argument. In Section~4 we reformulate the Deng--Robles problem as a Picard-generation problem on the completed image. In Section~5 we analyze the case where the pure period image is a curve and establish the required Picard-generation statement. In Section~6 we combine these results to prove the main theorem.

\section{Problem Statement}

Let \(S\) be a smooth quasi-projective surface, \(\bar S\) its smooth projective compactification, with boundary
\[
Z := \bar S \setminus S = \bigcup_i S_i
\]
a simple normal crossing (SNC) divisor. Let \(\mathbb V\) be a polarized integral variation of Hodge structure (VHS) on \(S\), with associated period map
\[
\Phi : S \longrightarrow \Gamma \backslash D,
\]
where \(D\) is the period domain of a given Hodge type, \(\Gamma \subset G(\mathbb Q)\) is a discrete subgroup of the algebraic group \(G\), such that \(\Gamma\backslash D\) is a Hausdorff complex analytic space. When necessary, we may assume the monodromy is unipotent after passing to a finite étale cover.

Deng--Robles in \cite{DengRobles23} constructed a weak fan \(\Sigma^{(2)}_{\Phi,S}\) compatible with \(\Phi\) in the case \(\dim S=2\), thereby obtaining a completion in the sense of Kato--Nakayama--Usui
\[
\Phi_\Sigma : S \longrightarrow \Gamma\backslash D_\Sigma,
\]
where \(D_\Sigma\) is the mixed period domain determined by \(\Sigma\). Denote the closure of the image by
\[
Y := \overline{\Phi_\Sigma(S)} \subset \Gamma\backslash D_\Sigma.
\]

Let \(\Lambda\) be the Deligne--Griffiths--Schmid extension of the Hodge line bundle (or augmented Hodge line bundle) on \(\bar S\). In fundamental cases such as weight \((1,2)\), \(\Lambda\) coincides with the usual Hodge bundle.

\begin{problem}[Deng--Robles Question~1.8, dimension two case {\cite{DengRobles23}}]\label{prob:DR-1.8}
Under the above setting, do there exist an integer \(m>0\) and integers \(a_i\ge 0\) such that
\[
Y \;\cong\; \Proj\Big(\bigoplus_{k\ge0}
H^0\big(\bar S,\ \mathcal O_{\bar S}\big(k(m\Lambda - \sum_i a_i S_i)\big)\Big)?
\]
\end{problem}

Here \(\Proj(-)\) is polarized by \(m\Lambda-\sum a_i S_i\). In other words, the question asks: Can \(Y\) be constructed as a Proj-type algebraic space using the augmented Hodge line bundle on the base \(\bar S\) and its boundary correction?

In previous work, the following conditional result has been established: Let
\[
\bar S \xrightarrow{\,f\,} Y
\]
be the Stein factorization of the analytic closure of \(\Phi_\Sigma\) on \(\bar S\), and let \(A\) be any ample line bundle on \(Y\). If in the Néron--Severi group \(N^1(\bar S)_\mathbb Q\) we have
\begin{equation}\label{eq:Span-base}
c_1(f^*A) \in \Span_{\mathbb Q}\big(c_1(\Lambda), [S_i]\big),
\tag{\(*\)\,Span}
\end{equation}
then Problem~\ref{prob:DR-1.8} has an affirmative answer. That is, in the case \(\dim S=2\), provided \eqref{eq:Span-base} is satisfied, the description of \(Y\) can be obtained via Proj argument.

The goal of this paper is twofold:

\begin{itemize}[leftmargin=2em]
  \item First, to precisely translate condition \eqref{eq:Span-base} into a Picard generation problem for the image space \(Y\), namely a structural statement about
  \[
  \Pic_{\mathbb Q}(Y) := \Pic(Y)\otimes_\ZZ \mathbb Q;
  \]
  \item Second, in the natural case ``pure period image dimension = 1'', using tools from Green--Griffiths--Robles \cite{GGR25} and Bakker--Brunebarbe--Tsimerman \cite{BBT23}, to completely prove the Picard generation of the image space \(Y\), thereby verifying the conclusion of Problem~\ref{prob:DR-1.8} in this case. In particular, this provides a non-Hermitian example with \(\dim Z=1\).
\end{itemize}

For convenience in the subsequent exposition, we first systematically organize the necessary external tools and basic results in the next section.

\section{Preliminaries}\label{sec:prelim}

This section collects external results and basic algebro-geometric facts used in the proofs of this paper.

\subsection{Kato--Usui and Kato--Nakayama--Usui Completion}\label{subsec:KNU}

Kato--Usui in \cite{KU09} gave the classification space for polarized pure VHS in the degenerate case; Kato--Nakayama--Usui in \cite{KNU10,KNU13} generalized this to the mixed case. A brief statement is as follows.

\begin{theorem}[Kato--Usui, pure case {\cite{KU09}}]\label{thm:KU-pure}
Let \(D\) be the period domain parametrizing polarized pure Hodge structures of a given type, and \(\Gamma\subset G(\mathbb Q)\) a discrete subgroup. Given a rational fan \(\Sigma\) satisfying strong compatibility with \(\Gamma\), there exists a complex space with log structure
\[
D_\Sigma
\]
and a natural embedding \(D\hookrightarrow D_\Sigma\), and for each polarized VHS with quasi-unipotent monodromy at the boundary, its period map \(\Phi:S\to \Gamma\backslash D\) extends to a log morphism
\[
\Phi_\Sigma : \bar S^{\log}\longrightarrow \Gamma\backslash D_\Sigma
\]
agreeing with the original period map on the smooth open subset \(S\).
\end{theorem}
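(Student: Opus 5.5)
The statement is the Kato--Usui theorem, which this paper quotes from \cite{KU09} rather than proves. So what follows is an outline of how the Kato--Usui argument goes, stated in the fan/nilpotent-orbit language the paper uses, and not a new proof. There are two parts: constructing \(D_\Sigma\) as a log-analytic space, and extending \(\Phi\).

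\textbf{Construction of \(D_\Sigma\).} For each rational nilpotent cone \(\sigma\in\Sigma\), I set \(D_\Sigma\) to be the set of pairs \((\sigma,Z)\). Here \(Z=\exp(\sigma_{\CC})F\) is a \(\sigma\)-nilpotent orbit: Griffiths transversality holds along \(\sigma\), and \(\exp(\sum y_jN_j)F\in D\) for all sufficiently large \(\operatorname{Im} y_j\). By strong compatibility with \(\Gamma\), the monoid \(\Gamma(\sigma)=\Gamma\cap\exp(\sigma)\) generates \(\sigma\). Consider the toric variety \(\operatorname{toric}_\sigma=\operatorname{Spec}\CC[\Gamma(\sigma)^{\vee}]\). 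I define the local model \(E_\sigma\subset \operatorname{toric}_\sigma\times\check D\) as the subset cut out by the nilpotent-orbit conditions. It carries the strong topology and the log structure pulled back from the toric variety. One then shows that \(E_\sigma\to\Gamma(\sigma)^{\rm gp}\backslash D_\sigma\) is a \(\sigma_{\CC}/\log\Gamma(\sigma)^{\rm gp}\)-torsor. This makes \(\Gamma(\sigma)^{\rm gp}\backslash D_\sigma\) a log-local ringed space, and \(\Gamma\backslash D_\Sigma\) is obtained by gluing these over the faces of \(\Sigma\).

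\textbf{Hausdorff property.} The analytic structure is the easy part. The hard part is Hausdorffness of \(\Gamma\backslash D_\Sigma\), together with the fact that the gluing maps are local homeomorphisms. I would prove this with the \(\mathrm{SL}_2\)-orbit theorem of Cattani--Kaplan--Schmid. It supplies a map \(D_\Sigma\to D^\sharp_{\Sigma}\) to the space of \(\mathrm{SL}_2\)-orbits, which is proper in the relevant sense. Via the Borel--Serre type space \(D_{\rm BS}\), where \(\Gamma\) acts properly, this reduces separation on \(\Gamma\backslash D_\Sigma\) to separation there. I expect this step to be the main obstacle.

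\textbf{Extension of \(\Phi\).} Near a point of \(\bar S\) where \(r\) boundary components meet, choose local coordinates \(q_1,\dots,q_r\). After a finite base change the monodromies \(T_j=\exp(N_j)\) are unipotent, and for \(\Gamma\)-invariance I use the quasi-unipotent version. Schmid's nilpotent orbit theorem writes the lifted period map as \(\exp(\sum z_jN_j)\psi(q)\), where \(z_j=\log q_j/2\pi i\) and \(\psi\) is holomorphic on the full polydisc. Compatibility of \(\Sigma\) with \(\Phi\) means the cone generated by \(N_1,\dots,N_r\) lies in some \(\sigma\in\Sigma\). On the boundary stratum, set \(\Phi_\Sigma=(\sigma',\exp(\sigma'_{\CC})\psi(q))\), where \(\sigma'\) is the face spanned by the \(N_j\) with \(q_j=0\).

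In the chart \(E_\sigma\), this map is given by the holomorphic functions \((q,\psi(q))\), so it is a morphism of log-analytic spaces. Over the real blow-up \(\bar S^{\log}\) it lifts continuously, because the arguments of the \(q_j\) record the \(\sigma_{\mathbb R}\)-torus directions. On \(S\) it agrees with \(\Phi\) by construction.
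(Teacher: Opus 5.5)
The paper gives no proof of this statement. It is quoted from \cite{KU09} and used as a black box, so the only question is whether your outline is a faithful sketch of Kato--Usui. The architecture is right: points $(\sigma,Z)$ with $Z$ a $\sigma$-nilpotent orbit, local models $E_\sigma\subset\operatorname{toric}_\sigma\times\check D$ with the strong topology, Hausdorffness via $\mathrm{SL}_2$-orbits and Borel--Serre spaces, and extension via Schmid's nilpotent orbit theorem. However, two steps are misstated in ways that would not survive being filled in.

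\textbf{Hausdorff step.} You have the auxiliary spaces mixed up.
\begin{itemize}
\item $D^\sharp_\Sigma$ is not the space of $\mathrm{SL}_2$-orbits. It is the set of pairs $(\sigma,Z)$ with $Z$ an $\exp(i\sigma_{\mathbb R})$-orbit whose $\exp(\sigma_\CC)$-saturation is a $\sigma$-nilpotent orbit.
\item The natural map runs $D^\sharp_\Sigma\to D_\Sigma$, forgetting the real part, and its fibres are compact real tori after dividing by $\Gamma(\sigma)^{\mathrm{gp}}$. There is no natural map $D_\Sigma\to D^\sharp_\Sigma$.
\item The space of $\mathrm{SL}_2$-orbits is a separate space $D_{\mathrm{SL}(2)}$. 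The CKS map into it is defined on $D^\sharp_\Sigma$, not on $D_\Sigma$, because the $\mathrm{SL}_2$-orbit attached to a nilpotent orbit depends on exactly the real part that $D_\Sigma$ forgets.
\item To get continuous maps along which the proper $\Gamma$-action on the Borel--Serre side can be transported, Kato--Usui further pass to the valuative spaces $D^\sharp_{\Sigma,\mathrm{val}}$, $D_{\mathrm{SL}(2),\mathrm{val}}$, $D_{\mathrm{BS},\mathrm{val}}$. Hausdorffness then descends to $\Gamma\backslash D_\Sigma$ through the proper surjections $\Gamma\backslash D^\sharp_{\Sigma,\mathrm{val}}\to\Gamma\backslash D^\sharp_\Sigma\to\Gamma\backslash D_\Sigma$.
\item Your continuous lift over $\bar S^{\log}$ also lands in $\Gamma\backslash D^\sharp_\Sigma$, not in $\Gamma\backslash D_\Sigma$.
\end{itemize}
Separately, $E_\sigma\to\Gamma(\sigma)^{\mathrm{gp}}\backslash D_\sigma$ is a $\sigma_\CC$-torsor, not a torsor under $\sigma_\CC/\log\Gamma(\sigma)^{\mathrm{gp}}$. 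An element $a\in\sigma_\CC$ acts on $\operatorname{toric}_\sigma$ through $\sigma_\CC/\log\Gamma(\sigma)^{\mathrm{gp}}\cong\operatorname{torus}_\sigma$, but on $\check D$ by $\exp(-a)$, which does not factor through that quotient.

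\textbf{Extension step.} At a point where $q_j=0$ exactly for $j\in J$, the limiting orbit has the form $\exp(\tau_\CC)\exp(\sum_{k\notin J}z_kN_k)\psi(q)$ for the appropriate cone $\tau$. Your formula drops the factor coming from the non-vanishing coordinates. That factor is absorbed neither by $\exp(\tau_\CC)$ nor by $\Gamma$. Your chart description $q\mapsto(q,\psi(q))\in E_\sigma$ is the right one and does produce it.

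More seriously, $\tau$ must be an element of $\Sigma$. If you only assume the local monodromy cone is contained in some $\sigma\in\Sigma$, the cone spanned by $\{N_j\}_{j\in J}$ need not be a face of $\sigma$.
\begin{itemize}
\item Take $\sigma=\mathrm{cone}(M_1,M_2)$ and a boundary point whose local monodromy cone is $\mathbb R_{\ge0}(M_1+M_2)$.
\item The chart $\operatorname{toric}_\sigma$ then places $q$ on the stratum of the smallest face containing the $N_j$, here $\sigma$ itself.
\item So $(q,\psi(q))\in E_\sigma$ requires a $\sigma$-nilpotent orbit, in particular $M_iF^p\subset F^{p-1}$ for $i=1,2$.
\item Schmid's theorem gives this only for $M_1+M_2$. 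Outside the classical Hermitian cases, where transversality is automatic, this is a genuine extra condition.
\end{itemize}
Compatibility therefore has to be formulated so that the limiting nilpotent orbits actually live on cones of $\Sigma$. One way is to require the local monodromy cones themselves to belong to $\Sigma$ up to $\Gamma$-conjugacy. This is the kind of compatibility the Deng--Robles weak fan is meant to provide, and the statement as quoted in the paper omits such a hypothesis entirely.

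Finally, a finite base change only produces an extension on the cover. To get a morphism from $\bar S$ itself you need the local monodromies in $\Gamma$ to be unipotent. This is automatic when $\Gamma$ is neat, since neat plus quasi-unipotent forces unipotent. Neatness is also what Kato--Usui need for $\Gamma\backslash D_\Sigma$ to be a logarithmic manifold.
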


In the mixed case, Kato--Nakayama--Usui constructed the classification space for log mixed Hodge structures:

\begin{theorem}[Kato--Nakayama--Usui, mixed case {\cite{KNU10,KNU13}}]\label{thm:KNU-mixed}
For any given graded-polarized mixed Hodge structure type and a weak fan \(\Sigma\), there exists a mixed period domain \(D\) and its nilpotent orbits space \(D_\Sigma\), with a natural \(\Gamma\)-action, such that: for any admissible graded-polarized VMHS with monodromy compatible with \(\Sigma\), the period map \(S\to \Gamma\backslash D\) extends to
\[
\Phi_\Sigma : \bar S^{\log}\to \Gamma\backslash D_\Sigma.
\]
\end{theorem}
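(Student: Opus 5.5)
This statement is quoted from \cite{KNU10,KNU13} and is used here as a black box. If I had to reconstruct it, I would follow the pure case (Theorem~\ref{thm:KU-pure}) in three stages: a set-theoretic construction of \(D_\Sigma\), a topology and log structure on \(\Gamma\backslash D_\Sigma\), and the extension of the period map. The main new feature compared with the pure case is the weight filtration. Accordingly, \(D\) is the mixed period domain of graded-polarized mixed Hodge structures of the fixed type, a homogeneous space under a group \(G\) whose Levi part acts on the graded pieces and whose unipotent radical acts on the extension data.

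\emph{Construction.} For each cone \(\sigma\in\Sigma\), a rational nilpotent cone in \(\mathfrak g_{\mathbb Q}\), define a \(\sigma\)-nilpotent orbit as a subset \(\exp(\sigma_{\CC})F\subset\check D\) with three properties:
\begin{itemize}
\item Griffiths transversality, \(N F^p\subset F^{p-1}\) for \(N\in\sigma\);
\item \(\exp(\sum z_jN_j)F\in D\) when \(\operatorname{Im} z_j\gg0\);
\item admissibility: the relative monodromy weight filtration \(M(\sigma,W)\) exists and is constant on the interior of \(\sigma\).
\end{itemize}
Set \(D_\Sigma\) to be the set of all such pairs \((\sigma,Z)\). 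The \(\Gamma\)-action is conjugation on cones together with the action on orbits. The weak-fan and strong-compatibility hypotheses ensure that the stabilizers \(\Gamma(\sigma)\) act through \(\exp(\sigma)\cap\Gamma\), so the toric charts \(\operatorname{toric}_\sigma=\operatorname{Spec}\CC[(\Gamma(\sigma)^{\rm gp})^\vee\cap\sigma^\vee]\) make sense.

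\emph{Topology and log structure.} Locally, I would embed \(\Gamma(\sigma)^{\rm gp}\backslash D_\sigma\) into \(\operatorname{toric}_\sigma\times\check D\) through the map \((q,F)\) with \(q=\exp(2\pi i z)\). The image is cut out by Griffiths transversality and the admissibility inequalities, which gives a log-local-ringed structure in the sense of KNU. The pure SL\(_2\)-orbit theorem of Cattani--Kaplan--Schmid, together with its mixed (admissible) analogue due to Kato--Nakayama--Usui and Pearlstein, is what shows that these local models glue and that \(\Gamma\backslash D_\Sigma\) is Hausdorff. The Hausdorff property is the step I expect to be the main obstacle. The difficulty is that in the mixed setting the unipotent radical makes the space non-classical, so properness of the \(\Gamma\)-action on the space of nilpotent orbits has to be extracted from the asymptotics of the SL\(_2\)-orbit (CKS-type estimates on the norms of the limiting split mixed Hodge structure). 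I would first prove it for the associated graded pure pieces and then control the extension classes.

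\emph{Extension of the period map.} Given an admissible graded-polarized VMHS on \(S\) with unipotent monodromy \(N_j\) around the components \(S_j\), apply the mixed nilpotent orbit theorem, which is a consequence of admissibility via Kashiwara and Steenbrink--Zucker. In local coordinates it gives \(\Phi(t)=\exp(\sum\tfrac{\log t_j}{2\pi i}N_j)\psi(t)\) with \(\psi\) holomorphic on \(\Delta^n\). At a point of \(\bar S^{\log}\) over a stratum, send it to the nilpotent orbit \((\sigma,\exp(\sigma_\CC)\psi(0))\), where \(\sigma\) is the cone spanned by the \(N_j\), which lies in \(\Sigma\) by compatibility. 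Continuity and compatibility with the log structures then follow from the holomorphy of \(\psi\) and the chart description above. On \(S\) the map restricts to \(\Phi\), as required.
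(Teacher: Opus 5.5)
Your treatment matches the paper, which gives no proof of this statement and simply imports it from \cite{KNU10,KNU13}, so invoking it as a black box is exactly the paper's route. In your optional reconstruction, two points are off. First, in Kato--Usui/KNU the local model is not an embedding of $\Gamma(\sigma)^{\mathrm{gp}}\backslash D_\sigma$ into $\operatorname{toric}_\sigma\times\check D$, because the pair $(q,F)$ is only determined modulo the $\sigma_{\CC}$-action; it is instead a $\sigma_{\CC}$-torsor $E_\sigma\to\Gamma(\sigma)^{\mathrm{gp}}\backslash D_\sigma$, with $E_\sigma\subset\operatorname{toric}_\sigma\times\check D$ carrying the strong topology. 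Second, compatibility in general only places the monodromy cone inside some cone of $\Sigma$, so one maps to the smallest such cone and must require the nilpotent-orbit condition for that cone as part of the hypothesis.
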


Usui in \cite{Usu06} proved the algebraicity and compactness of the extended image.

\begin{theorem}[Usui {\cite{Usu06}}]\label{thm:Usui}
Let \(S\) be a smooth quasi-projective variety, \(\bar S\) an SNC compactification, and \(\mathbb V\) an admissible polarized integral VMHS on \(S\) with quasi-unipotent monodromy. If there exists a weak fan \(\Sigma\) compatible with the monodromy, and
\[
\Phi_\Sigma : S\to \Gamma\backslash D_\Sigma
\]
is the Kato--Nakayama--Usui extension, then the closure of \(\Phi_\Sigma(S)\) carries a natural algebraic space structure and is a compact algebraic space.
\end{theorem}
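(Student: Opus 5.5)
The statement is Usui's theorem. The plan is to prove it by realizing the closure $Y=\overline{\Phi_\Sigma(S)}$ as the image of a proper morphism from a compact algebraic object, and then applying Artin's algebraization criteria.

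First, I would use Theorem~\ref{thm:KNU-mixed} to extend $\Phi_\Sigma$ to the Kato--Nakayama space, $\Phi_\Sigma:\bar S^{\log}\to\Gamma\backslash D_\Sigma$. Because $\Sigma$ is compatible with the monodromy, this map is a morphism of log spaces. The map $\tau:\bar S^{\log}\to\bar S$ is proper, with fibers real tori. The induced map on $\Gamma\backslash D_\Sigma$ factors through $\bar S$: the fiber of $\tau$ over a boundary point is the torus $\prod S^1$, which records arguments of the local coordinates. Moving along this torus changes the nilpotent-orbit representative by $\exp(\sum i\theta_j N_j)$, and this becomes trivial in the quotient by $\Gamma$ together with the $\sigma_{\mathbb C}$-torsor structure of $D_\Sigma$. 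This gives a continuous map $\bar\Phi:\bar S\to\Gamma\backslash D_\Sigma$. Since $\bar S$ is compact and $\Gamma\backslash D_\Sigma$ is Hausdorff (the KNU theorem under strong compatibility), the image $\bar\Phi(\bar S)$ is compact. It contains $\Phi_\Sigma(S)$ as a dense subset, so it equals $Y$.

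Second, I would show that $\bar\Phi$ is holomorphic in the log sense: locally, $\Gamma\backslash D_\Sigma$ is a log-analytic space, and the extended map is given by the nilpotent orbit theorem (respectively, the mixed version of the SL$_2$-orbit theorem for admissible VMHS). A proper morphism from the compact complex manifold $\bar S$ then has image a closed analytic subspace, by Remmert's proper mapping theorem adapted to log-analytic targets. This step is delicate, because $\Gamma\backslash D_\Sigma$ need not be an analytic space: it is only "log-smooth" in the KNU sense, with possible slits. I would handle this by noting that the image lies in a union of finitely many strata, on each of which the space is an honest analytic space. Remmert's theorem then applies stratum by stratum, and one glues using properness.

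Third, I would obtain the algebraic space structure. Since $\bar S$ is projective, the Stein factorization $\bar S\to Y'\to Y$ gives a normal Moishezon space $Y'$. Indeed, meromorphic functions pulled back from $Y'$ live on the projective $\bar S$, so the transcendence degree equals $\dim Y'$. By Artin's theorem, a compact Moishezon space is an algebraic space. Passing to $Y$ itself requires showing that the finite map $Y'\to Y$ descends algebraicity; finite quotients of algebraic spaces by closed equivalence relations are algebraic. I expect the main obstacle to be the second step: making Remmert's theorem work on the non-analytic KNU target, in particular near the boundary strata where the topology of $D_\Sigma$ is not the analytic one.
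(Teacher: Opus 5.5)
The paper gives no proof of this statement; it is imported from Usui, so I can only assess your sketch on its own terms. The architecture is sound: compactness from Hausdorffness, analyticity from a proper mapping theorem, then Moishezon plus Artin. But Step 2, which carries essentially all the content, has a genuine gap: applying Remmert stratum by stratum does not give $Y$ an analytic structure.

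Knowing that $Y$ meets each stratum of $\Gamma\backslash D_\Sigma$ in an analytic subset of that stratum says nothing about $Y$ near a point of a deeper stratum. It provides neither a local model nor a structure sheaf there, and ``gluing using properness'' is not a mechanism. Closed sets that are analytic stratum by stratum need not be analytic: in $\CC^2$ stratified by $\{x=0\}$ and $\{x\neq0\}$, the closure of the graph of $e^{1/x}$ is the graph plus the whole line $\{x=0\}$, and this set is not analytic.

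The missing idea is to use the log-manifold structure itself, which supplies exactly the ambient analytic space you need. By Kato--Usui (Kato--Nakayama--Usui in the mixed case), each $y\in Y$ has a neighbourhood $V_1$ and an isomorphism of locally ringed spaces $\chi$ from $V_1$ onto an open piece of the locus, in a log smooth analytic space $Z$, where finitely many log differential forms vanish. This locus carries the strong topology and the inverse-image sheaf. Then $\chi\circ\bar\Phi$ is an honest holomorphic map $\bar\Phi^{-1}(V_1)\to Z$, and Remmert applies in $Z$ with no stratification. You must, however, check the two points that the slits make nontrivial:
\begin{itemize}
\item[(a)] Properness for the ordinary topology of $Z$ over some neighbourhood $W$ of $\chi(y)$. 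Choose $V\ni y$ such that $U:=\bar\Phi^{-1}(V)$ has closure inside $\bar\Phi^{-1}(V_1)$; this is possible since $\bar\Phi$ is closed. Injectivity of $\chi$ then shows that the compact set $(\chi\circ\bar\Phi)(\partial U)$ misses $\chi(y)$.
\item[(b)] That on the resulting analytic set the strong and ordinary topologies coincide. This holds because both are quotient topologies of the same proper surjection from $\bar\Phi^{-1}(V\cap\chi^{-1}(W))$.
\end{itemize}

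Step 3 also needs repair. Pullback only embeds $\mathcal M(Y')$ into $\mathcal M(\bar S)$ and gives no lower bound on its transcendence degree. The correct input is Moishezon's theorem that the image of a Moishezon space under a proper surjective holomorphic map is Moishezon. Its proof descends meromorphic functions from a subvariety of $\bar S$ that is generically finite over the image, using symmetric functions of their values on fibres. Once Step 2 shows $Y$ is a compact analytic space, this applies to $Y$ directly and Artin's theorem finishes, so the Stein factorization is unnecessary. Moreover, the principle you invoke to descend from $Y'$ (that quotients of algebraic spaces by finite closed equivalence relations are automatically algebraic spaces) is not true in that generality in characteristic zero.

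Finally, Hausdorffness of $\Gamma\backslash D_\Sigma$ and the extension over $\bar S$ require $\Gamma$ neat and $\Sigma$ strongly compatible with $\Gamma$. With merely quasi-unipotent monodromy you must first pass to a finite cover and then descend the result through a finite quotient, which your sketch does not mention.
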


In the two-dimensional case of this paper, Deng--Robles in \cite{DengRobles23} proved the existence of the weak fan.

\begin{theorem}[Deng--Robles {\cite{DengRobles23}}]\label{thm:DengRobles-weak-fan}
Let \(\Phi:S\to\Gamma\backslash D\) be the period map of a pure, polarized, integral VHS with \(\dim S=2\) and \(\bar S\) an SNC compactification. Then there exists a weak fan \(\Sigma_{\Phi,S}^{(2)}\) compatible with the monodromy of \(\Phi\), such that the corresponding Kato--Nakayama--Usui completion \(\Phi_\Sigma:S\to\Gamma\backslash D_\Sigma\) exists, and its image
\[
Y:=\overline{\Phi_\Sigma(S)}
\]
is a compact algebraic space.
\end{theorem}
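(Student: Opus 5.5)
The plan is to assemble the statement from three earlier results, without new Hodge theory. These are the existence of a weak fan compatible with a period map on a surface (from \cite{DengRobles23}), the Kato--Nakayama--Usui extension theorem (Theorem~\ref{thm:KNU-mixed}, and Theorem~\ref{thm:KU-pure} in the pure setting), and Usui's algebraicity and compactness result (Theorem~\ref{thm:Usui}).

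First, I would reduce to unipotent monodromy. After a finite étale cover $S'\to S$, with $\bar S'$ an SNC compactification dominating $\bar S$ (resolving if necessary), the local monodromies around the components $S_i$ become unipotent. Second, I would use the logarithms $N_i$ of these monodromies, together with the nilpotent orbits they generate at the points of $Z$. Following the construction of \cite{DengRobles23}, these give the collection of rational nilpotent cones $\sigma=\mathbb{Q}_{\ge0}\langle N_i : i\in I\rangle$, for $I$ running over index sets with $\bigcap_{i\in I}S_i\neq\emptyset$, together with their $\Gamma$-conjugates. The point specific to $\dim S=2$ is that the deepest strata are points $S_i\cap S_j$. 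Only the two-dimensional cones $\langle N_i,N_j\rangle$ need to be controlled, and the weak-fan axioms can be checked there. The key axiom is that two cones sharing a nilpotent orbit are compatible on their common face; it follows from the several-variable $SL_2$-orbit theorem applied at the two-parameter degenerations. I take all of this as given from \cite{DengRobles23}, which yields $\Sigma=\Sigma^{(2)}_{\Phi,S}$, strongly compatible with $\Gamma$ in the weak sense.

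Third, with $\Sigma$ in hand, I would apply Theorem~\ref{thm:KNU-mixed}. A pure polarized VHS is trivially admissible, so the period map extends to $\Phi_\Sigma:\bar S^{\log}\to\Gamma\backslash D_\Sigma$. Fourth, Theorem~\ref{thm:Usui} then says that the closure $Y=\overline{\Phi_\Sigma(S)}$ is a compact algebraic space. To descend from the cover $S'$, I would note that the image of $S'$ coincides with that of $S$. The map on closures is therefore literally the same subset, and the algebraic structure is intrinsic to $\Gamma\backslash D_\Sigma$.

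The main obstacle is the second step: verifying the weak-fan compatibility at the corner points $S_i\cap S_j$, where the nilpotent orbits of $\langle N_i,N_j\rangle$ for different corners (and their $\Gamma$-translates) must not overlap badly. This is exactly the content of \cite{DengRobles23}, and I would cite it rather than reprove it.
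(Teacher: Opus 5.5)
Your proposal is correct and follows the paper, which gives no argument of its own beyond attributing the statement to \cite{DengRobles23}; your assembly (the weak fan from \cite{DengRobles23}, the extension via Theorem~\ref{thm:KNU-mixed}, then Theorem~\ref{thm:Usui} for compactness and algebraicity of the closure) is exactly how the paper's preliminaries fit together. Your heuristic description of the weak-fan axiom is slightly off: the KNU condition is that two cones with a common interior point admitting a common nilpotent orbit must coincide, so rays must also be checked against the interiors of $\Gamma$-translates of corner cones, not only the two-dimensional cones among themselves; since you cite \cite{DengRobles23} for that step, this does not affect the argument.
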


\subsection{Quasi-projectivity and Theta Positivity of Mixed Period Map Images}\label{subsec:BBT}

Bakker--Brunebarbe--Klingler--Tsimerman in \cite{BBKT24} proved the definability of mixed period maps in \(\mathbb R_{\mathrm{an},\exp}\); Bakker--Brunebarbe--Tsimerman in \cite{BBT23,BBT23GAGA} used o-minimal GAGA to obtain the quasi-projectivity of the image and relative positivity of the theta line bundle.

\begin{theorem}[BBKT, BBT {\cite{BBKT24,BBT23,BBT23GAGA}}]\label{thm:BBT-image}
Let \(\Phi^{\mathrm{mix}}:X^{\mathrm{an}}\to\Gamma\backslash D\) be a mixed period map given by an admissible graded-polarized mixed HS, and assume \(\Phi^{\mathrm{mix}}\) is weakly proper in the topological sense. Then there exist:
\begin{itemize}[leftmargin=2em]
  \item A normal quasi-projective algebraic space \(Y^{\mathrm{mix}}\),
  \item An algebraic map \(X\to Y^{\mathrm{mix}}\) and an analytic map \(Y^{\mathrm{mix,an}}\to\Gamma\backslash D\),
\end{itemize}
such that \(Y^{\mathrm{mix,an}}\) is isomorphic to \(\overline{\Phi^{\mathrm{mix}}(X^{\mathrm{an}})}\subset \Gamma\backslash D\), and:
\begin{enumerate}[label=(\roman*),leftmargin=2em]
  \item \(Y^{\mathrm{mix}}\) is a quasi-projective algebraic space;
  \item There exists a theta line bundle \(\Theta\) on \(Y^{\mathrm{mix}}\) which is big and nef;
  \item Denote by \(\Phi^{\mathrm{pure}}\) the associated graded pure period map, and \(Y^{\mathrm{pure}}\) the normalization of its image. Then \(Y^{\mathrm{pure}}\) is quasi-projective, \(\Theta|_{Y^{\mathrm{pure}}}\) is ample on \(Y^{\mathrm{pure}}\); more generally, \(\Theta\) is relatively ample for \(Y^{\mathrm{mix}}\to Y^{\mathrm{pure}}\).
\end{enumerate}
\end{theorem}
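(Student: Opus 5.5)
This statement is quoted from Bakker--Brunebarbe--Klingler--Tsimerman and Bakker--Brunebarbe--Tsimerman, so the plan is to assemble it from those works rather than prove it from scratch. The order is definability first, then o-minimal GAGA for the space, then the theta bundle for positivity.

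\emph{Step 1: definability.} By \cite{BBKT24}, the mixed period map \(\Phi^{\mathrm{mix}}\) is definable in \(\mathbb R_{\mathrm{an},\exp}\) with respect to the natural definable structures on \(X^{\mathrm{an}}\) and on \(\Gamma\backslash D\). Definability is what allows the image to be treated with o-minimal tools.

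\emph{Step 2: the algebraic space and the Stein factorization.}
\begin{enumerate}[label=(\alph*),leftmargin=2em]
\item Weak properness implies that the image \(\overline{\Phi^{\mathrm{mix}}(X^{\mathrm{an}})}\) is a closed definable analytic subvariety of \(\Gamma\backslash D\).
\item The definable Stein factorization \(X^{\mathrm{an}}\to Y'\to\Gamma\backslash D\) exists in the definable analytic category.
\item The o-minimal GAGA theorem of \cite{BBT23GAGA} then says that \(Y'\) is the definable analytification of a unique algebraic space \(Y^{\mathrm{mix}}\).
\item Normality follows because Stein factorization produces a normal space.
\item By the same GAGA theorem, the definable holomorphic map \(X^{\mathrm{an}}\to Y'\) is algebraic.
\end{enumerate}

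\emph{Step 3: the theta bundle.} Positivity comes from the theta line bundle of \cite{BBT23}.
\begin{enumerate}[label=(\alph*),leftmargin=2em]
\item Take \(\Theta\) to be a suitable power of the Griffiths bundle on the graded pieces, tensored with the pullback of the biextension or theta bundle on the unipotent fibres of \(Y^{\mathrm{mix}}\to Y^{\mathrm{pure}}\).
\item Its curvature form is semipositive, and strictly positive along directions where the period map is immersive. Together with definability, this gives that \(\Theta\) is nef and big.
\item On the pure part, Bakker--Brunebarbe--Tsimerman show that the Griffiths bundle descends to an ample bundle on \(Y^{\mathrm{pure}}\). This gives quasi-projectivity of \(Y^{\mathrm{pure}}\) and the ampleness of \(\Theta|_{Y^{\mathrm{pure}}}\) asserted in (iii).
\item The fibres of \(Y^{\mathrm{mix}}\to Y^{\mathrm{pure}}\) are quotients of generalized intermediate Jacobians and tori. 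The theta bundle restricts to a positive bundle on these fibres, so \(\Theta\) is relatively ample.
\end{enumerate}

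\emph{Step 4: quasi-projectivity of \(Y^{\mathrm{mix}}\).} Combine the relative ampleness from Step 3(d) with the ampleness of the base bundle from Step 3(c). For \(k\gg0\), \(\Theta\otimes\pi^*L^{k}\) is ample on \(Y^{\mathrm{mix}}\), where \(\pi: Y^{\mathrm{mix}}\to Y^{\mathrm{pure}}\) and \(L\) is ample on \(Y^{\mathrm{pure}}\). This gives (i).

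\emph{Main obstacle.} I expect the hardest step to be Step 3(d), the relative ampleness over \(Y^{\mathrm{pure}}\). The first approach would be to identify the fibres with abelian-type varieties, which is where the extension data in weight-graded Hodge theory enters, and then check positivity of \(\Theta\) there.
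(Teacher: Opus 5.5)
The paper gives no proof of this statement; it is quoted from Bakker--Brunebarbe--Klingler--Tsimerman and Bakker--Brunebarbe--Tsimerman, so your plan of assembling it from definability, o-minimal GAGA/definable images, and the Griffiths and theta-bundle positivity results of those works is essentially the paper's own route. If you keep your internal sketches, two points are off and should rest on the citations rather than on your arguments: Step~2 produces a normal space that is only finite over the image closure, not isomorphic to it; and in Step~3(d) the fibres of $Y^{\mathrm{mix}}\to Y^{\mathrm{pure}}$ are subvarieties of intermediate-Jacobian-type fibres, which are generally non-algebraic and on which the polarization-induced bundle has indefinite curvature, so relative ampleness requires the horizontality input placing these fibres in translates of abelian subtori (as in the GGR torus theorem), not positivity on whole fibres.
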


In this paper, this result is used to confirm that the normal compactification of the pure period image \(Z\) is a quasi-projective (actually projective) curve, with its Hodge/augmented Hodge line bundle ample on \(Z\), so that \(N^1(Z)_\mathbb Q\) is one-dimensional and generated by this class; moreover, on the mixed image \(Y\), the theta line bundle has the same polarization effect as the augmented Hodge line bundle from GGR/GGLR in the fiber direction.

\subsection{SBB-type Completion and Positivity of Augmented Hodge Line Bundle}\label{subsec:GGLR-GGR}

Green--Griffiths--Laza--Robles in \cite{GGLR20} constructed the SBB-type completion of period maps and proved Hodge line bundle positivity in dimension \(\le 2\); Green--Griffiths--Robles in \cite{GGR20SBB,GGR25} systematically developed this structure.

At the level needed for this paper, this can be summarized as:

\begin{theorem}[GGLR, GGR {\cite{GGLR20,GGR20SBB,GGR25}}]\label{thm:GGLR-SBB}
Let \(B\) be a smooth quasi-projective variety, \(\bar B\) its SNC compactification, and \(\Phi:B\to\Gamma\backslash D\) the period map of a polarized VHS with unipotent monodromy. Then there exist:
\begin{itemize}[leftmargin=2em]
  \item A stratified complex analytic space \(S\) and topologically continuous maps
  \[
  B \xrightarrow{\ \Phi^c\ } S \xrightarrow{\ \Phi^f\ } \mathcal P,
  \]
  where \(\Phi^c\) has connected fibers and \(\Phi^f\) has finite fibers;
  \item When \(\dim B\le 2\), \(S\) itself is a complex analytic space, and the extended augmented Hodge line bundle \(\widehat\Lambda_e\) is ample on \(S\).
\end{itemize}
In particular, for the case where we only care about the pure period image being a one-dimensional curve, the normal compactification \(Z\) of its image is a smooth projective curve, and the Hodge/augmented Hodge line bundle \(\Lambda_Z\) is ample on \(Z\), hence
\[
N^1(Z)_\mathbb Q = \mathbb Q\cdot c_1(\Lambda_Z).
\]
\end{theorem}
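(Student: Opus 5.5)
The first two assertions are not proved here. They are imported from the cited works, and the plan is to check that our setting satisfies their hypotheses. We have a polarized integral VHS on a smooth quasi-projective $B$ with SNC compactification $\bar B$, and the monodromy becomes unipotent after a finite étale cover, as allowed in Section~2. Under these hypotheses \cite{GGLR20,GGR20SBB} construct the Stein-type factorization $B\xrightarrow{\Phi^c}S\xrightarrow{\Phi^f}\mathcal P$ through the SBB-type completion. The map $\Phi^c$ is obtained by contracting the connected components of the fibres of the extended period map on $\bar B$, and $\Phi^f$ is the induced finite map. When $\dim B\le 2$, \cite{GGLR20} shows that $S$ is a complex analytic space and that $\widehat\Lambda_e$ is ample on it. The only work is to match normalizations: ``augmented Hodge line bundle'' and ``extended'' must mean the Deligne--Griffiths--Schmid extension of $\Lambda$ used in Problem~\ref{prob:DR-1.8}. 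I would record this identification explicitly, including the passage to a finite cover and descent of a power of the line bundle.

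The real content is the ``in particular'' clause, which I would deduce in three steps.

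\textbf{Step 1 (compactness).} Let $C$ be the image of the pure period map, assumed one-dimensional. Because $\bar B$ is compact, the completion $S$ is compact, so the closure of $C$ in the SBB-type space is a compact analytic curve. By the projectivity that follows from ampleness of $\widehat\Lambda_e$ (Grauert/GAGA), or alternatively by Theorem~\ref{thm:BBT-image}(iii), this curve is algebraic and projective.

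\textbf{Step 2 (smoothness).} Let $Z$ be its normalization. A normal one-dimensional variety is smooth, so $Z$ is a smooth projective curve.

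\textbf{Step 3 (ampleness).} Let $\nu:Z\to S$ be the composite of the normalization with the inclusion; it is a finite morphism. Set $\Lambda_Z:=\nu^*\widehat\Lambda_e$. The pullback of an ample bundle under a finite map is ample, so $\Lambda_Z$ is ample.

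Finally, on a smooth projective curve numerical equivalence is detected by degree, so $\deg: N^1(Z)_{\mathbb Q}\to\mathbb Q$ is an isomorphism. Since $\deg\Lambda_Z>0$, we get $N^1(Z)_{\mathbb Q}=\mathbb Q\cdot c_1(\Lambda_Z)$.

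The main obstacle is Step~1 together with its interface with Step~3. The curve $Z$ must be realized as a closed subvariety, up to finite map, of the space $S$ on which ampleness is proved, not merely of the non-Hausdorff quotient $\Gamma\backslash D$. This requires knowing that the boundary points added to the one-dimensional pure image are finite in number and land in $S$. My first approach would be to use the finiteness of $\Phi^f$ and the connectedness of the fibres of $\Phi^c$: the image of $\bar B$ in $S$ is then a compact analytic space, and its irreducible component through $C$ is the desired compact curve. If the dimension hypothesis on $B$ were unavailable, I would fall back on Theorem~\ref{thm:BBT-image}(iii) for ampleness of $\Theta$ on $Y^{\mathrm{pure}}$, and on the fact that a quasi-projective curve has a unique smooth projective completion, to which an ample bundle of positive degree extends.
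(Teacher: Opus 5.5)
Your proposal is correct and takes essentially the paper's route. The paper gives no separate argument: it imports the factorization and the ampleness of $\widehat\Lambda_e$ for $\dim B\le 2$ from \cite{GGLR20,GGR20SBB,GGR25} and treats the curve case as immediate, and your Steps 1--3 (compactness of the Stein factorization of the extended map on $\bar B$, smoothness of a normal curve, ampleness of $\nu^*\widehat\Lambda_e$ under the finite map $\nu$, then the degree identifying $N^1(Z)_{\mathbb Q}$ with $\mathbb Q$) are exactly that implicit deduction. You should drop your fallback, though: ampleness on a non-proper (hence affine) curve is vacuous, and a line bundle on it extends to the completion in every degree, so Theorem~\ref{thm:BBT-image}(iii) alone gives nothing there without a canonical extension and a positivity-of-degree argument; also, $\Gamma\backslash D$ is Hausdorff, as assumed in Section~2. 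Neither point affects your main argument, which uses $\dim B\le 2$.
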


\subsection{GGR Local Structure: LMHS Fibers and Theta--Boundary Formula}\label{subsec:GGR-local}

Green--Griffiths--Robles in \cite{GGR25} systematically characterized the local structure of period maps at infinity. The key conclusions needed for this paper can be summarized as follows.

Let \(\Phi:S\to\Gamma\backslash D\) be a period map as above, \(\bar S\) an SNC compactification, with boundary decomposed as \(\bar S\setminus S=\bigcup_i S_i\). For each strata index set \(I\), denote
\[
S_I := \bigcap_{i\in I} S_i,\qquad S_I^\circ := S_I \setminus \bigcup_{j\notin I} S_j.
\]

On \(S_I^\circ\), the monodromy is controlled by a rational nilpotent cone \(\sigma_I\), with associated LMHS moduli denoted by \(M_I\), then taking a certain quotient \(M_I^1\) and pure graded Mumford--Tate domain \(D_I\). GGR constructed three maps
\[
S_I^\circ \xrightarrow{\ \Psi_I\ } (\Gamma_I\exp(\CC\sigma_I))\backslash M_I
\longrightarrow \Gamma_I\backslash M_I^1
\xrightarrow{\ \Phi_I\ } \Gamma_I\backslash D_I,
\]
where the middle map is denoted \(\Theta_I\). On an appropriate closure \(S_I^c\), these maps can still be extended with good properties.

\begin{theorem}[Fibers are complex tori, {\cite[Thm.~4.1]{GGR25}}]\label{thm:GGR-fiber-torus}
For any \(I\), the map
\[
\pi_0 : \Gamma_I\backslash M_I^1 \longrightarrow \Gamma_I\backslash D_I
\]
has all fibers being compact complex tori modulo finite quotients. More precisely, each fiber contains an abelian variety as a sub-torus; if \(\Gamma\) is neat, these finite quotients are trivial. If \(A\subset S_I^c\) is a connected component of a \(\Phi_I\)-fiber, then \(\Theta_I(A)\) lies in a finite quotient of a translate of an abelian variety within some \(\pi_0\)-fiber.
\end{theorem}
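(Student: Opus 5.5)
The plan is to identify each fiber of \(\pi_0\) with a space of extension data of limiting mixed Hodge structures whose associated graded is fixed, and to realize that space as a quotient of a complex vector space by a lattice. Fix a point \(F_0\in D_I\), i.e.\ a pure graded Hodge structure on \(\mathrm{Gr}^{W}\) for the weight filtration \(W=W(\sigma_I)\). The fiber of \(M_I\to D_I\) over \(F_0\) is a torsor under the unipotent group \(\exp(W_{-1}\mathfrak g_{\CC})\) modulo its intersection with the stabilizer of \(F_0\). Concretely, it is the homogeneous space
\[
W_{-1}\mathfrak g_{\CC}\big/ \bigl(F^0 W_{-1}\mathfrak g_{\CC}\bigr),
\]
read via the Deligne splitting. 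Passing to \(M_I^1\) is designed to quotient by the non-compact directions: the \(\exp(\CC\sigma_I)\)-directions and the \(W_{-2}\) part, which carries no compact quotient. What remains is the weight \(-1\) piece
\[
\mathrm{Gr}^W_{-1}\mathfrak g_{\CC}\big/ F^0\,\mathrm{Gr}^W_{-1}\mathfrak g_{\CC}.
\]
Dividing by the lattice \(\Gamma_I\cap \exp(\mathrm{Gr}^W_{-1}\mathfrak g_{\ZZ})\) gives the generalized intermediate Jacobian \(J(\mathrm{Gr}^W_{-1}\mathfrak g)\). This is a compact complex torus because \(\mathrm{Gr}^W_{-1}\mathfrak g\) is a pure Hodge structure of odd weight \(-1\), so \(F^0\oplus\overline{F^0}\) is the whole space and the lattice has full rank.

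The residual action of \(\Gamma_I\) on the fiber is through the stabilizer of \(F_0\) in \(\Gamma_I/\Gamma_I\cap W_{-1}\). That stabilizer is finite, since it is discrete and preserves a polarized pure Hodge structure, hence lies in a compact group. This yields "tori modulo finite quotients". When \(\Gamma\) is neat, a finite subgroup of a neat group is trivial, so the quotients disappear.

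For the sub-abelian variety and the statement about \(\Theta_I(A)\), I would use the horizontality of the GGR maps. The map \(\Theta_I\) is horizontal, so its differential on a \(\Phi_I\)-fiber takes values in \(F^{-1}\cap \mathrm{Gr}^W_{-1}\mathfrak g\) modulo \(F^0\). This means it lies in the sub-Hodge structure \(H'\subset \mathrm{Gr}^W_{-1}\mathfrak g\) generated by the Hodge types \((-1,0)\) and \((0,-1)\). The torus \(J(H')\) is a level-one Hodge structure of weight \(-1\) that is polarized by the restriction of the graded polarization. By Riemann's criterion, \(J(H')\) is therefore an abelian variety, and it sits inside the \(\pi_0\)-fiber as a subtorus. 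Now let \(A\) be a connected component of a \(\Phi_I\)-fiber. It is compact, because \(\Phi_I\) is proper on \(S_I^c\), and it is connected. Integrating the horizontal differential along \(A\) from a base point shows that \(\Theta_I(A)\) lies in a translate of \(J(H')\), after lifting to a finite cover if needed to deal with the finite quotient.

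The main obstacle will be making precise that the quotient \(M_I\to M_I^1\) removes exactly the non-compact directions, with \(\exp(\CC\sigma_I)\) and the \(W_{-2}\) part killed consistently with the \(\Gamma_I\)-action. I would first check this in the Deligne splitting \(\mathfrak g_{\CC}=\bigoplus I^{p,q}\). There I would verify that the lattice image in \(\mathrm{Gr}^W_{-1}\) is cocompact, using the fact that \(\Gamma_I\) is arithmetic in the centralizer of \(\sigma_I\).
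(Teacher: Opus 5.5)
\paragraph{Main gap: the abelian-variety step.} The compact-torus part of your argument can be repaired (see the next paragraph), but the step producing the abelian variety does not work as written.

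The subspace $\mathfrak g^{-1,0}\oplus\mathfrak g^{0,-1}$ of $\mathrm{Gr}^W_{-1}\mathfrak g_\CC$ is defined over $\mathbb{R}$, but in general not over $\mathbb{Q}$. So it is not itself a sub-Hodge structure and has no intermediate Jacobian. Neither way of making your $H'$ rational works:
\begin{itemize}
\item If $H'$ means the smallest rational sub-Hodge structure containing it, then $H'$ usually has level $>1$. In general $\mathrm{Gr}^W_{-1}\mathfrak g$ contains types such as $(-2,1)$, and when it is irreducible $H'$ is everything. Then $J(H')$ is a Griffiths-type torus and Riemann's criterion says nothing.
\item If $H'$ means the \emph{largest} rational sub-Hodge structure inside the level-one part, then $J(H')$ is indeed abelian. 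But integrating the horizontal differential only places $\Theta_I(A)$ in a translate of the image of $\mathfrak g^{-1,0}$ in the torus. That image is an immersed complex subgroup which may be dense in a real subtorus, even in all of $J$. Nothing puts $\Theta_I(A)$ inside $J(H')$.
\end{itemize}

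The missing ingredient is the compactness of $A$, which you state but never use. The compact analytic set $\Theta_I(A)-\Theta_I(a_0)$ generates a \emph{closed} complex subtorus $T$. Concretely, $T$ is the image of $A^{2k}\to J$, $(a_i,b_i)\mapsto\sum_i\bigl(\Theta_I(a_i)-\Theta_I(b_i)\bigr)$ for $k\gg0$; work componentwise if $A$ is reducible. Generic submersivity of this map shows that $T_0T$ is spanned by horizontal vectors, hence lies in the image of $\mathfrak g^{-1,0}$.

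The lattice of $T$ spans a rational subspace $H'$ whose real points map isomorphically onto the complex subspace $T_0T$. Stability under the complex structure gives $H'_\CC=(H'_\CC\cap\overline{F^0})\oplus(H'_\CC\cap F^0)$. Horizontality then puts the first summand in $\mathfrak g^{-1,0}$ and the second in $\mathfrak g^{0,-1}$. So $H'$ is a rational level-one sub-Hodge structure, polarized by restriction, and only now does Riemann's criterion show that $T=J(H')$ is an abelian variety. This is Griffiths' classical argument for the algebraic part of an intermediate Jacobian.

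\paragraph{Two further corrections.} First, the fibre of $M_I\to D_I$ is not a torsor under all of $\exp(W_{-1}\mathfrak g_\CC)$. Take $F'=e^XF$ with $X\in W_{-1}\mathfrak g_\CC\cap\bigoplus_{p<0}I^{p,q}$. The nilpotent-orbit condition $NF'^p\subset F'^{p-1}$ for $N\in\sigma_I$ says $e^{-\mathrm{ad}X}N-N\in F^{-1}\mathfrak g_\CC$. This difference lies in $\bigoplus_{p\le-2}I^{p,q}$, so it vanishes, i.e.\ $[X,N]=0$. Hence the fibres are modelled on $W_{-1}\mathfrak z(\sigma_I)$, with $\mathfrak z(\sigma_I)$ the centralizer, and the torus is $J(\mathrm{Gr}^W_{-1}\mathfrak z(\sigma_I))$. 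Each $N\in\sigma_I$ is a morphism of mixed Hodge structures of type $(-1,-1)$, so this is still a polarizable weight $-1$ Hodge structure and your compactness argument survives. The abelian subvariety must then be found inside this smaller torus.

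Second, you need the image of $\Gamma_I\cap\exp(W_{-1}\mathfrak z(\sigma_I))$ in $\mathrm{Gr}^W_{-1}\mathfrak z(\sigma_I)_{\mathbb{R}}$ to be a full lattice, and this is not a fact for a merely discrete $\Gamma$. Suppose the unipotent part of $\Gamma_I$ were generated by the local monodromies $e^{N_j}$, with $N_j\in\sigma_I\subset W_{-2}\mathfrak g$. Then this image would be zero and the fibre would be a vector space. Arithmeticity of $\Gamma_I$ (e.g.\ of $\Gamma$) is a hypothesis you must impose, not a fact you can invoke.

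Granting it, your finiteness argument for the residual group is fine. The image of $\Gamma_I$ acting on $\mathrm{Gr}^W$ is discrete because, being arithmetic, it preserves a lattice in each $\mathrm{Gr}^W_\ell V$. Point stabilizers in $D_I$ are compact. Neatness passes to this image.
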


GGR Chapter 5 constructed a theta-type line bundle \(\mathcal L_M\) on \(\Gamma_I\backslash M_I^1\), corresponding to a certain \(\mathfrak{sl}_2\)-triple \((M,Y,N)\) in the LMHS.

\begin{theorem}[Theta--boundary formula, {\cite[Thm.~5.1]{GGR25}}]\label{thm:GGR-theta-boundary}
For each \(\mathfrak{sl}_2\)-triple \((M,Y,N)\), there exists a line bundle \(\mathcal L_M\) on \(\Gamma_I\backslash M_I^1\) such that: for any connected component \(A\subset S_I^c\) of a \(\Phi_I\)-fiber, denoting by \(S_j\) the boundary components intersecting \(A\), there is a numerical equality in the Néron--Severi group of \(A\):
\[
\Theta_I^*\mathcal L_M\big|_A \equiv \sum_j Q(M,N_j)\,[S_j]\big|_A,
\]
where \(Q(M,N_j)\in\ZZ\) comes from the polarization form.
\end{theorem}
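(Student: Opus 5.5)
This theorem is quoted from \cite[Thm.~5.1]{GGR25}. The route I would take to prove it is the following.

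\textbf{Constructing $\mathcal L_M$.} I would build $\mathcal L_M$ as a theta (Appell--Humbert) bundle on the torus fibration $\pi_0:\Gamma_I\backslash M_I^1\to\Gamma_I\backslash D_I$ of Theorem~\ref{thm:GGR-fiber-torus}.

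\begin{enumerate}[leftmargin=2em]
  \item Over a point of $D_I$, the fibre of $M_I^1$ parametrizes the extension data of the limiting mixed Hodge structure, with its associated graded held fixed. This fibre is a quotient of a complex vector space $E$ by a lattice $E_\ZZ$.
  \item The $\mathfrak{sl}_2$-triple $(M,Y,N)$ and the polarization $Q$ give a bilinear pairing $(u,v)\mapsto Q(Mu,v)$ on $E$. Its imaginary part should restrict to an integral alternating form on $E_\ZZ$, i.e.\ a Riemann form. I would verify this using integrality of $Q$ and rationality of $M$, passing to a multiple if necessary.
  \item This Riemann form yields a Hermitian form $H_M$ and a factor of automorphy $e(\lambda,z)$. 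It is $\Gamma_I$-equivariant because $\Gamma_I$ preserves $Q$ and centralizes $\sigma_I$ up to the relevant parabolic. So it descends to a line bundle $\mathcal L_M$ on $\Gamma_I\backslash M_I^1$.
\end{enumerate}

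\textbf{Computing $\Theta_I^*\mathcal L_M|_A$.} Fix a connected component $A$ of a $\Phi_I$-fibre. Along $A$ the graded pure data are constant, so only the extension coordinate moves. Near a point of $A$, choose local coordinates $t_j$ cutting out the boundary components $S_j$ that meet $A$. The nilpotent orbit theorem writes the local lift of the period map as
\[
\exp\Big(\sum_j \tfrac{\log t_j}{2\pi i}N_j\Big)\,\psi(t),
\]
with $\psi$ holomorphic. In the quotient by $\exp(\CC\sigma_I)$, the extension class therefore has a multivalued part $\sum_j \frac{\log t_j}{2\pi i}\,N_j(\cdot)$ plus a holomorphic part. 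Going once around $S_j$ translates this class by the lattice vector $N_j(\cdot)$. The theta factor of automorphy for that translation contributes a local factor $t_j^{Q(M,N_j)}$, up to a nowhere-vanishing holomorphic function. This follows from the quadratic form defining $\mathcal L_M$, evaluated on the quasi-periods via $Q(M\,\cdot,N_j\,\cdot)$.

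\textbf{Globalizing along $A$.} I would then glue these local computations over a cover of a neighbourhood of $A$. The local functions $t_j$ are transition data for $\mathcal O(S_j)$. This should identify $\Theta_I^*\mathcal L_M|_A$ with
\[
\bigotimes_j \mathcal O(S_j)^{\otimes Q(M,N_j)}\big|_A
\]
up to a twist coming from the holomorphic part $\psi$. Changing the coordinates $t_j\mapsto u_jt_j$ by units, or varying $\psi$, only translates the theta bundle along the abelian variety of Theorem~\ref{thm:GGR-fiber-torus}. Such translates are algebraically equivalent to the original bundle, so the discrepancy lies in $\Pic^0(A)$. This is why the statement is only a numerical equality in the Néron--Severi group of $A$.

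\textbf{Main obstacle.} I expect the hardest step to be the integrality and sign bookkeeping: showing that the exponent produced by the automorphy factor is exactly the integer $Q(M,N_j)$, and that it is independent of the chosen point of $\sigma_I$ and of the lift. I would first settle this in the rank-one case $|I|=1$ by a direct computation with the $\mathfrak{sl}_2$-splitting (Deligne splitting) of the limiting mixed Hodge structure. For $|I|=2$ I would reduce to the rank-one case using commutativity of the $N_j$ and the several-variable $SL_2$-orbit theorem.
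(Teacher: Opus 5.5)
The paper gives no argument for this statement. It is imported from \cite[Thm.~5.1]{GGR25} and used as a black box, so your sketch has to stand on its own. Its computational core does not hold up. The step ``going once around $S_j$ translates the class by the lattice vector $N_j(\cdot)$, and the theta factor of automorphy contributes $t_j^{Q(M,N_j)}$'' fails for three independent reasons.

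(i) $\Theta_I$ is defined on the stratum after dividing by $\exp(\CC\sigma_I)$, which removes exactly the terms $\frac{\log t_j}{2\pi i}N_j$. In that quotient there is no multivalued part, so your sentence asserting one contradicts itself. Moreover, for $j\in I$ the divisor $S_j$ contains $A$, so there is no loop around $S_j$ inside $A$ on which to compute anything.

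(ii) The $N_j$ are $(-1,-1)$-morphisms of the limiting mixed Hodge structure, hence lie in $W_{-2}$ of the Lie algebra. So $\exp(\CC N_j)$ does not move the compact-torus ($\mathrm{Gr}_{-1}$) coordinate of the $\pi_0$-fibres. It moves only the non-compact, $\CC^*$-type extension data that $M_I^1$ forgets.

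(iii) Even formally the computation fails. Inserting $z=\frac{\log t_j}{2\pi i}\lambda+(\text{holomorphic})$ into the Appell--Humbert factor $\exp\bigl(\pi H(z,\lambda)+\frac{\pi}{2}H(\lambda,\lambda)\bigr)$ gives $t_j^{H(\lambda,\lambda)/(2i)}$ times a unit. That is a purely imaginary (or zero) power, not $t_j^{Q(M,N_j)}$.

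Your globalization paragraph does not repair this. $\Theta_I|_A$ is itself given by the holomorphic part $\psi$, so its variation is what $\Theta_I^*\mathcal L_M$ measures, not a translate to be thrown into $\Pic^0(A)$.

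What is missing is the actual link between $\mathcal L_M$ and the boundary, and there the roles are the reverse of yours. Numerically only $c_1(\mathcal L_M)$ matters; reading $Mu$ as $[M,u]$, your form $Q(Mu,v)=Q(M,[u,v])$ may well be the right one. But you need $\mathcal L_M$ realized as the line bundle attached, via the character $Q(M,\cdot)$, to the $\CC^*$-type $W_{-2}$-extension data lying over the torus fibration. Then two things happen:

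\begin{itemize}
\item It is the lattice translations of the torus coordinate that multiply this $W_{-2}$-coordinate by theta-type factors, through the Heisenberg structure $[W_{-1},W_{-1}]\subset W_{-2}$.
\item The classes $[S_j]|_A$ enter through the coordinate change you discarded as harmless. A lift of $\Psi_I|_A$ past $\exp(\CC\sigma_I)$ exists only after choosing local defining equations $t_j$ of the $S_j\supset A$. Replacing $t_j$ by $u_jt_j$ changes the lift by $\exp\bigl(-\frac{\log u_j}{2\pi i}N_j\bigr)$, which multiplies the character coordinate by $u_j^{\mp Q(M,N_j)}$.
\end{itemize}

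The local lifts therefore glue to a nowhere-vanishing section of $\Theta_I^*\mathcal L_M$ twisted by powers of the normal bundles $\mathcal O(S_j)|_A$. Components meeting $A$ transversally contribute through the order of vanishing of that coordinate along $S_j\cap A$.

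The model case is the rank-one boundary of a toroidal compactification of $\mathcal A_2$. There the translation $z\mapsto z+\tau_{22}$ sends the boundary coordinate $q=e^{2\pi i\tau_{11}}$ to $q\,e^{2\pi i(2z+\tau_{22})}$. So the normal bundle of the boundary divisor, pulled back to the elliptic fibres, has the factor of automorphy of the inverse square of the theta bundle.

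Since this is a first-order, normal-bundle phenomenon, the several-variable $SL_2$-orbit theorem you plan to invoke for $|I|=2$ is not the relevant tool.
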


This gives a precise linear relation between the theta line bundle and boundary divisors in the fiber direction.

GGR Chapter 6 further proves that \(\Psi_I\) is almost completely determined by \(\Theta_I\) in an appropriate sense: given the geometry of \(\Theta_I\), the variation of extension data is highly restricted in the fiber direction.

\begin{theorem}[Extension rigidity, {\cite[Thm.~6.9]{GGR25}}]\label{thm:GGR-rigidity}
Let \(A'\subset S_I^c\) be a connected component of a \(\Theta_I\)-fiber satisfying certain non-degeneracy conditions. Then there exists an appropriate arithmetic subgroup \(\Gamma'_{I,\infty}\) such that:
\begin{itemize}[leftmargin=2em]
  \item \(\Psi_I\) has a lift to the finer quotient
  \[
  (\Gamma'_{I,\infty}\exp(\CC\sigma_I))\backslash (M_I\cap S);
  \]
  \item This lift is locally constant on the \(\Theta_I\)-fiber \(A'\).
\end{itemize}
In other words, given the geometry of \(\Theta_I\), \(\Psi_I\) does not introduce new continuous parameters in the fiber direction.
\end{theorem}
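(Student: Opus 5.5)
The plan is to separate the data carried by \(\Psi_I\) into what \(\Theta_I\) already records and a residual \emph{extension} part, and to show that the residual part is constant along a \(\Theta_I\)-fiber. The tools are horizontality together with a compactness/Liouville-type argument.

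\textbf{Step 1: structure of \(M_I\to M_I^1\).} I would first describe the projection \(M_I\to M_I^1\) at the level of period domains. Fix the weight filtration \(W=W(\sigma_I)\) attached to the cone. The fibers of \(M_I\to M_I^1\) should be orbits of a unipotent group \(U_I=\exp(\mathfrak u_I)\), where \(\mathfrak u_I\subset \mathfrak g\) consists of the pieces of \(W_{-2}\mathfrak g\) not already seen by \(M_I^1\). The next claim is that after dividing by \(\exp(\CC\sigma_I)\), each fiber is isomorphic to an affine space \(\mathfrak u_I/\CC\sigma_I\). Passing to \(\Gamma_I\), this affine space gets divided by the discrete group \(\Gamma_I\cap U_I\). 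I would then define \(\Gamma'_{I,\infty}\) as the subgroup of \(\Gamma_{I,\infty}\) acting trivially on the \(U_I\)-direction modulo \(\exp(\CC\sigma_I)\), namely the intersection with the centralizer of the relevant graded data. The non-degeneracy hypothesis on \(A'\) is used exactly here: it guarantees that the restricted quotient \((\Gamma'_{I,\infty}\exp(\CC\sigma_I))\backslash(M_I\cap S)\) is Hausdorff and maps to \(\Gamma_I\backslash M_I^1\) with fibers covered by \(\mathfrak u_I/\CC\sigma_I\).

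\textbf{Step 2: lifting \(\Psi_I|_{A'}\).} Since \(\Theta_I\) is constant on \(A'\), the image of \(\Psi_I|_{A'}\) lies in a single fiber of the projection to \(\Gamma_I\backslash M_I^1\). The monodromy of the restricted family along loops in \(A'\) therefore preserves the limiting graded data. Hence, after replacing \(\Gamma_I\) by a finite-index neat subgroup, this monodromy lands in \(\Gamma'_{I,\infty}\). Covering-space theory then gives the lift of \(\Psi_I|_{A'}\) to the finer quotient, which is the first bullet. To lift over all of \(S_I^c\) near \(A'\), I would work in a tubular neighbourhood retracting onto \(A'\), using the extension of \(\Psi_I\) to \(S_I^c\).

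\textbf{Step 3: local constancy on \(A'\).} On \(A'\), the lift is a holomorphic map into a quotient of the affine space \(\mathfrak u_I/\CC\sigma_I\) by a lattice-type group. I would show its differential vanishes by two arguments.
\begin{enumerate}[label=(\alph*),leftmargin=2em]
\item \emph{Horizontality.} Griffiths transversality for the LMHS variation along the stratum forces the derivative of \(\Psi_I\) into \(F^{-1}\mathfrak g\). The kernel of \(d\Theta_I\) consists of the weight \(\le -2\) components. Intersecting these, the derivative lies in the \((-1,-1)\)-part of \(\mathrm{Gr}^W_{-2}\), modulo \(\CC\sigma_I\). This is the main obstacle: I must check that this \((-1,-1)\)-part reduces to \(\CC\sigma_I\) on \(A'\), using the \(\mathfrak{sl}_2\)-structure and the polarization \(Q(M,N_j)\).
\item \emph{Compactness fallback.} If (a) only gives the derivative in a larger space, I would use that \(A'\) is compact and connected (a component of a fiber of a proper map). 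After passing to a finite cover of \(A'\) on which the residual monodromy is trivial, the lift becomes a holomorphic map from a compact connected space to an affine space, hence constant.
\end{enumerate}
Either route gives local constancy on \(A'\), which is the second bullet and shows that \(\Psi_I\) introduces no new continuous parameters along \(A'\).
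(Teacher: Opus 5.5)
The paper does not prove this statement; it quotes it from Green--Griffiths--Robles. Your argument therefore has to stand on its own, and it breaks at the central step.

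\textbf{Step 2 is unjustified.} You infer from ``the monodromy along $A'$ preserves the graded data'' plus neatness that this monodromy lies in a subgroup $\Gamma'_{I,\infty}$ acting trivially on the fibre direction $\mathfrak u_I/\CC\sigma_I$. That does not follow. Constancy of $\Theta_I$ on $A'$ and neatness only place the monodromy, roughly, in $\Gamma_I\cap\exp(W_{-2}\mathfrak g)$. This is an infinite lattice-type group, and it acts on the fibres of $M_I\to M_I^1$ by translations that are in general nontrivial modulo $\exp(\CC\sigma_I)$. Your own parenthetical description of $\Gamma'_{I,\infty}$ as ``the centralizer of the graded data'' contains this group, so it is inconsistent with ``acting trivially''. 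Worse, trivial monodromy in the fibre direction is essentially equivalent to the conclusion: a constant developing map forces it, and nontrivial translation monodromy rules out constancy. So Step~2 is circular as written.

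\textbf{Route (b) fails for the same reason.} No finite cover of $A'$ trivializes an infinite monodromy group. The actual target is not an affine space but a quotient of one by a lattice-type group. It has $\CC^*$-type factors and, from the $\mathfrak g^{-2,0}\oplus\mathfrak g^{0,-2}$ part of $\mathrm{Gr}^W_{-2}\mathfrak g$, even compact torus factors. Compactness of $A'$ plus holomorphy alone therefore cannot force constancy; horizontality must enter.

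\textbf{Route (a) does not close the gap either.} The space $\mathfrak g^{-1,-1}$ is in general much larger than $\CC\sigma_I$: for weight one, toric rank $r$ and a one-dimensional cone it is $\mathrm{Sym}^2\CC^r$. No pointwise argument can push the derivative into $\CC\sigma_I$, because horizontal holomorphic discs $z\mapsto\exp(zX)F$ with $X\in\mathfrak g^{-1,-1}$ exist locally. You also dropped the horizontal pieces $\mathfrak g^{-1,q}$ with $q\le -2$ in deeper weights.

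\textbf{The missing idea is to combine the two routes.} In $\mathrm{Gr}^W_{-2}$, horizontality makes the $\mathfrak g^{-2,0}$ (and further non-horizontal) coordinates of the developing map constant on the connected cover. Hence the monodromy has no such components, and by conjugation none in $\mathfrak g^{0,-2}$ either. The remaining translations are then \emph{real} vectors in $\mathfrak g^{-1,-1}_{\mathbb R}$. The imaginary parts of the $\mathfrak g^{-1,-1}$-coordinates, modulo $\mathbb R\sigma_I$, therefore descend to pluriharmonic functions on the compact connected $A'$. These are constant by the maximum principle, and a holomorphic map with constant imaginary part is constant. A similar conjugate-coordinate argument handles $\mathfrak g^{-1,-2}$ and deeper weights, working inductively along $W$. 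This gives constancy of the developing map on $A'$. Only afterwards does it follow that the monodromy along $A'$ fixes the lifted value, which is what actually produces the lift in the first bullet.
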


This theorem shows that all truly important continuous geometric information is already captured by the theta--boundary formula in Theorem~\ref{thm:GGR-theta-boundary}, a point that plays a key role in the analysis of the vertical Picard group.

\subsection{Deng--Robles Completion and Contraction}\label{subsec:DengRobles}

In \cite{DengRobles23}, based on Theorem~\ref{thm:DengRobles-weak-fan}, Deng--Robles also constructed an SBB-type completion \(\mathcal P^*\) and proved an algebraic contraction from the KNU completion image \(Y\) to \(\mathcal P^*\).

Briefly, in the case \(\dim S=2\):

\begin{theorem}[Deng--Robles {\cite{DengRobles23}}]\label{thm:DengRobles-contraction}
Under the setting of the introduction, for any image of a KNU completion \(\Phi_\Sigma:S\to Y\), there exists an algebraic contraction map
\[
\pi : Y \longrightarrow \mathcal P^*,
\]
where \(\mathcal P^*\) is the image of the SBB-type completion constructed by GGLR/GGR. This contraction is compatible with the Stein factorization of the pure period map.
\end{theorem}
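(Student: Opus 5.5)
The plan is to build $\pi$ by descending the SBB-type map along the Stein factorization of the KNU completion. Let $f:\bar S\to Y$ be the proper surjective map with connected fibers obtained from the analytic closure of $\Phi_\Sigma$, replacing $Y$ by its normalization if necessary. On the other side, since $\dim S=2$, Theorem~\ref{thm:GGLR-SBB} gives a complex analytic space $\mathcal S$ and a map $\Phi^c:\bar S\to\mathcal S$ with connected fibers, on which $\widehat\Lambda_e$ is ample. Hence $\mathcal P^*$ (the image of $\mathcal S$ under $\Phi^f$, or $\mathcal S$ itself after Stein factorization) is projective. Concretely, I would realize it as $\Proj\bigoplus_k H^0(\bar S,k\Lambda)$ for the DGS-extended augmented Hodge bundle. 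The contraction $\pi$ should then be the unique map with $\pi\circ f=\Phi^c$.

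\textbf{Step 1: $\Phi^c$ is constant on the fibers of $f$.} Take two points $x,y\in\bar S$ with $f(x)=f(y)$. If both lie in $S$, they have the same Hodge structure modulo $\Gamma$, hence the same pure period point. If $x\in S_I^\circ$, the KNU point $\Phi_\Sigma(x)$ is a $\Gamma$-class of a nilpotent orbit $(\sigma_I,\exp(\CC\sigma_I)F)$. This class determines the associated limiting mixed Hodge structure modulo $\Gamma_I\exp(\CC\sigma_I)$, that is, the value $\Psi_I(x)$. Composing with $\Theta_I$ and $\Phi_I$, it determines the graded pure Hodge structure $\Phi_I(x)\in\Gamma_I\backslash D_I$, which is exactly what $\Phi^c$ records on the stratum. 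Equal KNU images therefore force the same boundary cone (up to $\Gamma$) and equal values of $\Phi_I$, so $\Phi^c(x)=\Phi^c(y)$.

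\textbf{Step 2: descent of the map.} Because $f$ is proper with connected fibers and $Y$ is normal, $f_*\mathcal O_{\bar S}=\mathcal O_Y$. The rigidity lemma then shows that $\Phi^c$, being continuous, holomorphic and constant on the $f$-fibers, factors as a holomorphic map $\pi:Y^{\mathrm{an}}\to\mathcal P^{*,\mathrm{an}}$. It is surjective and has connected fibers, so it is a contraction. Compatibility with the Stein factorization of the pure period map holds by construction, since $\pi\circ f=\Phi^c$ and $\Phi^c$ is itself that Stein factorization.

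\textbf{Step 3: algebraicity.} Both $Y$ (Theorem~\ref{thm:DengRobles-weak-fan}, Theorem~\ref{thm:Usui}) and $\mathcal P^*$ are proper algebraic spaces, so I would apply GAGA for proper algebraic spaces to the graph of $\pi$ to conclude that $\pi$ is algebraic. Alternatively, one can argue with line bundles. The pullback $\pi^*\mathcal O(1)$ is a line bundle on $Y$ whose $f$-pullback is $m\Lambda$. Since $f_*\mathcal O_{\bar S}=\mathcal O_Y$, sections of $k m\Lambda$ on $\bar S$ descend to $Y$, and $\pi$ is the map defined by these sections.

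\textbf{Main obstacle.} I expect the difficulty to lie in Step 1 at the corner strata $S_I^\circ$ with $|I|=2$, and in comparing different strata inside one $f$-fiber. There one must check that the weak fan $\Sigma^{(2)}_{\Phi,S}$ identifies nilpotent orbits only when their graded pure parts agree. I would handle this using the refinement chain $\Psi_I\to\Theta_I\to\Phi_I$ together with continuity of $\Phi^c$: the locus where $\Phi^c$ is constant is closed, and the fibers of $f$ are connected, so it suffices to verify agreement on the open strata.
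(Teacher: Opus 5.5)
The paper does not prove this statement; it quotes it from Deng--Robles and uses it as a black box. Your proposal is therefore a reconstruction rather than a variant of an argument in the paper, and in outline it is correct.

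The only Hodge-theoretic input you actually use is that every $N\in\sigma$ is a $(-1,-1)$-morphism of the limiting mixed Hodge structure. Hence $\exp(\sigma_\CC)$ acts trivially on $\mathrm{Gr}^{W(\sigma)}$. So $[(\sigma,Z)]\mapsto[\mathrm{Gr}^{W(\sigma)}Z]$ is a well-defined map on $\Gamma\backslash D_\Sigma$, and $\Phi_e=\Phi^f\circ\Phi^c$ factors set-theoretically through $\Phi_\Sigma$.

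This does not depend on which strata $x$ and $y$ lie in. The ``main obstacle'' you flag at corner strata is therefore not an extra condition to verify on the weak fan; it is built into what a point of $\Gamma\backslash D_\Sigma$ is.

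What Step~1 genuinely needs, and should state at that point rather than in the last paragraph, is connectedness. Fibres of $\Phi^c$ are connected components of fibres of $\Phi_e$. Equality of $\Phi_e$-values gives equality of $\Phi^c$-values only because each $f$-fibre is connected. For the same reason, your $\pi$ is defined on the normalized Stein factorization, which is the paper's convention for $Y$ in Section~\ref{sec:HPic-reduction}. On the literal closure $\overline{\Phi_\Sigma(S)}$, whose $\Phi_\Sigma$-fibres may be disconnected, you only get a set-theoretic map to $\Phi_e(\bar S)$. Making that a morphism again requires normalizing.

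Compared with the bare citation, your route shows that the contraction is formal once three structural inputs are granted:
\begin{enumerate}
\item $Y$ is a normal compact complex space, $f$ is holomorphic for Usui's algebraic structure, and $f_*\mathcal O_{\bar S}=\mathcal O_Y$.
\item $\Phi^c$ is holomorphic onto a normal space with $\Phi^c_*\mathcal O_{\bar S}=\mathcal O$.
\item GAGA holds for proper algebraic spaces.
\end{enumerate}
Input (2) is stronger than what Theorem~\ref{thm:GGLR-SBB} records in this paper, which states only that $\Phi^c$ is continuous and that its target is analytic when $\dim\le 2$. Your description $\mathcal P^*=\Proj\bigoplus_k H^0(\bar S,k\Lambda)$ presupposes (2), since it amounts to semiampleness of $\Lambda$. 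So (2) must be imported from GGLR/GGR, not derived from your argument.

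The citation to Deng--Robles covers exactly these analytic inputs. Your argument supplies a short, checkable proof of everything else and makes clear precisely which inputs the contraction depends on.
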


This result ensures that the geometry of \(Y\) obtained under different weak fan choices is consistent in the pure part and compatible with the pure image geometry of GGLR/GGR.

\subsection{Picard, Néron--Severi, and Horizontal/Vertical Decomposition}\label{subsec:AG-basic}

This subsection collects some standard algebro-geometric facts that are used as foundations in what follows but will not be proven.

\begin{itemize}[leftmargin=2em]
  \item For a projective normal algebraic variety \(Y\), the Néron--Severi group \(N^1(Y)\) is defined as the \(\ZZ\)-module of Cartier divisors modulo numerical equivalence; there is a natural isomorphism
  \[
  N^1(Y)_\mathbb Q := N^1(Y)\otimes_{\ZZ}\mathbb Q \;\cong\; \Pic(Y)\otimes_{\ZZ}\mathbb Q =: \Pic_{\mathbb Q}(Y).
  \]
  \item If \(f:X\to Y\) is a projective morphism with connected fibers, then
  \[
  f_*\mathcal O_X = \mathcal O_Y,
  \]
  and pullback induces an injection on \(N^1\):
  \[
  f^*:N^1(Y)_\mathbb Q\hookrightarrow N^1(X)_\mathbb Q.
  \]
  \item If \(h:Y\to Z\) is a projective morphism where \(Z\) is a smooth curve and \(h\) has connected fibers, then there is a short exact sequence
  \[
  0\to N^1_{\mathrm{vert}}(Y/Z)_\mathbb Q \longrightarrow N^1(Y)_\mathbb Q \xrightarrow{h_*} N^1(Z)_\mathbb Q\to 0,
  \]
  where \(N^1_{\mathrm{vert}}(Y/Z)_\mathbb Q\) is the vector space of divisor classes with non-zero intersection on a general fiber. The horizontal part is
  \[
  N^1_{\mathrm{hor}}(Y)_\mathbb Q := h^*N^1(Z)_\mathbb Q \subset N^1(Y)_\mathbb Q.
  \]
  \item For a smooth projective curve \(Z\), \(N^1(Z)_\mathbb Q\) is one-dimensional, generated by the first Chern class of any ample line bundle.
\end{itemize}

\section{Structure of Question~1.8 and Reduction to Picard Generation}\label{sec:HPic-reduction}

This section precisely translates the gap in Problem~\ref{prob:DR-1.8} into a Picard generation problem on the image
space \(Y\).

Throughout Sections~\ref{sec:HPic-reduction} and~\ref{sec:dimZ1} we work with normal projective
algebraic spaces and \(\mathbb{Q}\)-Cartier divisors. When necessary, we pass to a small
\(\mathbb{Q}\)-factorialization \(\mu\colon Y' \rightarrow Y\), which is an isomorphism in codimension
one. Then \(N^1(Y)_\mathbb{Q} \cong N^1(Y')_\mathbb{Q}\) via \(\mu^*\), intersection numbers with
curves are well defined on \(Y'\), and all numerical equalities descend to \(Y\). For notational
simplicity we continue to write \(Y\).

\subsection{Augmented Hodge Line Bundle on the Image Space}\label{subsec:Y-Hodge}

By Theorem~\ref{thm:BBT-image}, Theorem~\ref{thm:GGLR-SBB} and Theorem~\ref{thm:DengRobles-contraction}, in the setting \(\dim S = 2\), the image \(Y\) of the
Kato–Nakayama–Usui completion after normalization is a normal projective algebraic space, and there is
an augmented Hodge line bundle induced from the base Hodge line bundle.

More concretely, denote by
\[
Y^\circ := \Phi(S) \subset \Gamma\backslash D
\]
the pure period image, and let \(Y\) be the normalized closure of \(Y^\circ\) inside \(\Gamma\backslash
D_\Sigma\). Using the comparison between the theta line bundle of Bakker–Brunebarbe–Tsimerman and the
augmented Hodge line bundle of Green–Griffiths–Laza–Robles / Green–Griffiths–Robles, we fix a line
bundle \(\Lambda_Y\) on \(Y\) with the following properties.

First, the restriction \(\Lambda_Y|_{Y^\circ}\) is isomorphic, up to a fixed positive rational multiple,
to the usual Hodge line bundle on the pure image \(Y^\circ\). Second, if
\[
f \colon \bar S \longrightarrow Y
\]
denotes the Stein factorization of the analytic closure of \(\Phi_\Sigma\) on \(\bar S\), then the
augmented Hodge line bundle \(\Lambda\) on \(\bar S\) satisfies
\[
f^*\Lambda_Y \;\cong\; \Lambda.
\]
Third, \(\Lambda_Y\) is big and nef on \(Y\), and its restriction to \(Y^\circ\) is ample.

\subsection{Relation between \texorpdfstring{$\Lambda_Y$}{Λ\_Y} and the Theta Line Bundle}\label{subsec:Lambda-Theta}

Let \(\Theta_Y\) denote the restriction to \(Y\) of the theta line bundle constructed in
Theorem~\ref{thm:BBT-image} for the mixed period image. On the pure locus \(Y^\circ\), both \(\Lambda_Y\) and
\(\Theta_Y\) restrict, up to rational multiples, to the usual Hodge line bundle of the pure period map.

\begin{proposition}\label{prop:Lambda-Theta}
There exist a rational number \(u > 0\), a finite collection of prime divisors
\(\{D_j\}\) contained in \(Y \setminus Y^\circ\), and rational coefficients \(b_j \in \mathbb{Q}\) such
that in \(N^1(Y)_\mathbb{Q}\) one has
\[
c_1(\Theta_Y)
\;=\;
u\,c_1(\Lambda_Y)
\;+\;
\sum_j b_j [D_j].
\]
Equivalently, on the pure locus \(Y^\circ\) we have
\[
c_1(\Theta_Y)|_{Y^\circ}
=
u\,c_1(\Lambda_Y)|_{Y^\circ},
\]
and the difference \(c_1(\Theta_Y) - u\,c_1(\Lambda_Y)\) is represented by a \(\mathbb{Q}\)-Cartier
divisor supported on \(Y \setminus Y^\circ\).
\end{proposition}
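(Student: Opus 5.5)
The plan is to compare the two line bundles on the open pure locus, where the comparison is essentially tautological. We then extend the discrepancy over $Y$ as a Weil divisor supported on the boundary. Normality and the small $\mathbb{Q}$-factorialization fixed at the start of this section let us treat that divisor as $\mathbb{Q}$-Cartier.

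The first step is to fix a common normalization on $Y^\circ$. By the first property of $\Lambda_Y$ in Subsection~\ref{subsec:Y-Hodge}, $\Lambda_Y|_{Y^\circ}$ is a positive rational multiple of the Hodge line bundle $\mathcal{L}_{\mathrm{Hdg}}$ of the pure period map. By Theorem~\ref{thm:BBT-image}(iii), the restriction of $\Theta$ to the pure image is ample and is built from the Hodge filtration, i.e.\ it is the determinant of the graded Hodge bundles. On the pure locus there are no nontrivial weight graded pieces beyond $\mathbb{V}$ itself, so $\Theta|_{Y^\circ}$ is also a positive rational multiple of $\mathcal{L}_{\mathrm{Hdg}}$. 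The precise multiple depends on the chosen normalization of $\Theta$, such as $\det F^p$ versus $\bigotimes_p \det F^p$, but it is positive in either case. Choosing integers $p,q>0$ with $\Theta_Y^{\otimes p}|_{Y^\circ}\cong\Lambda_Y^{\otimes q}|_{Y^\circ}$ gives $u=q/p>0$. I would state this as an isomorphism of line bundles, not merely of numerical classes, by pulling back to $S$ via $\Phi$. There both bundles are canonically identified with powers of $\det$ of the Hodge bundles, and the identification descends along the Stein factorization $f$ because $f_*\mathcal{O}=\mathcal{O}$.

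The second step is extension across the boundary. Set $\mathcal{M}:=\Theta_Y^{\otimes p}\otimes\Lambda_Y^{\otimes -q}$, a line bundle on $Y$ with a nowhere-vanishing section $s$ over $Y^\circ$. Since $Y$ is normal and $Y\setminus Y^\circ$ is a closed analytic, in fact algebraic, subset, $s$ is a rational section of $\mathcal{M}$ on $Y$. Here rationality needs justification, and this is the main obstacle. I would argue it via the definability and o-minimal GAGA in Theorem~\ref{thm:BBT-image}: both bundles and the identification on $Y^\circ$ are definable, hence algebraic, so $s$ has at most finite-order poles and zeros along the boundary. Alternatively, one can use that both bundles have logarithmic, Deligne-type growth near the boundary, so the norm of $s$ grows at most polynomially in $-\log|t|$ and $s$ is meromorphic. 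Then $\mathrm{div}(s)=\sum_j m_j D_j$ with integers $m_j$, where the $D_j$ are prime Weil divisors contained in $Y\setminus Y^\circ$. Codimension-$\ge 2$ components of the boundary contribute nothing.

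Finally, $c_1(\mathcal{M})=[\mathrm{div}(s)]$ in $\Pic(Y)$, passing to $Y'$ if needed so that each $D_j$ is $\mathbb{Q}$-Cartier. Dividing by $p$ gives
\[
c_1(\Theta_Y)=u\,c_1(\Lambda_Y)+\sum_j \tfrac{m_j}{p}[D_j],
\]
so $b_j=m_j/p$. The equivalent formulation in the statement is then immediate: restricting to $Y^\circ$ kills every $[D_j]$. Conversely, if the difference of the two classes vanishes on $Y^\circ$, the localization sequence for divisor class groups shows it is supported on $Y\setminus Y^\circ$. If one only wants numerical equality in $N^1(Y)_\mathbb{Q}$, the localization sequence $\bigoplus_j\mathbb{Q}[D_j]\to \mathrm{Cl}(Y)_\mathbb{Q}\to\mathrm{Cl}(Y^\circ)_\mathbb{Q}\to 0$ already suffices, provided the first step is upgraded from numerical to linear equivalence on $Y^\circ$. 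That upgrade is exactly why I insist on the canonical identification via $\det$ of the Hodge bundles.
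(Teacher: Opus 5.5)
Your proposal is correct and follows the same route as the paper: form $\Theta_Y^{\otimes p}\otimes\Lambda_Y^{\otimes(-q)}$, trivialize it over $Y^\circ$, and read off the boundary divisor of the resulting rational section on the normal space $Y$. The paper's proof passes over two steps that you treat explicitly. It records only an equality in $N^1(Y^\circ)_{\mathbb{Q}}$ and then asserts $M|_{Y^\circ}\simeq\mathcal{O}_{Y^\circ}$, whereas you establish linear equivalence; and it takes the rationality of the section along $Y\setminus Y^\circ$ for granted, whereas you justify its meromorphy there. So your version is a more careful rendering of the same argument.
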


\begin{proof}
By Theorem~\ref{thm:BBT-image}, the mixed period image carries a big and nef theta line bundle \(\Theta\) that is
relatively ample over the pure image and ample on the pure image itself. Restricting \(\Theta\) to
\(Y\) yields \(\Theta_Y\). On the other hand, by Theorem~\ref{thm:GGLR-SBB} and Theorem~\ref{thm:DengRobles-contraction}, the augmented Hodge
line bundle on the GGLR/GGR completion pulls back to a line bundle on \(Y\) which we denote by
\(\Lambda_Y\); by construction, \(\Lambda_Y|_{Y^\circ}\) is the usual Hodge line bundle on the pure
image up to a fixed positive rational multiple.

On the pure locus \(Y^\circ\), both line bundles \(\Theta_Y|_{Y^\circ}\) and
\(\Lambda_Y|_{Y^\circ}\) are ample and represent the same Hodge-theoretic class up to scale. Hence
there exists \(u \in \mathbb{Q}_{>0}\) such that
\[
c_1(\Theta_Y)|_{Y^\circ} = u\,c_1(\Lambda_Y)|_{Y^\circ}
\]
in \(N^1(Y^\circ)_\mathbb{Q}\).

Choose \(m\in\mathbb{Z}_{>0}\) with \(mu\in\mathbb{Z}\) and set
\[
M:=\Theta_Y^{\otimes m}\otimes \Lambda_Y^{\otimes(-mu)}.
\]
Over \(Y^\circ\) we have \(M|_{Y^\circ}\simeq \mathcal{O}_{Y^\circ}\). Since \(Y\) is normal, the
rational section of \(M\) extends with divisor supported in \(Y\setminus Y^\circ\). Dividing by \(m\)
yields a \(\mathbb{Q}\)-Cartier divisor representing
\[
c_1(\Theta_Y) - u\,c_1(\Lambda_Y)
\]
with support in \(Y\setminus Y^\circ\). Decomposing this divisor into its finitely many irreducible
components, we obtain prime divisors \(D_j \subset Y \setminus Y^\circ\) and coefficients \(b_j \in
\mathbb{Q}\) such that
\[
c_1(\Theta_Y) - u\,c_1(\Lambda_Y) = \sum_j b_j [D_j]
\]
in \(N^1(Y)_\mathbb{Q}\), as claimed.

In applications we may enlarge the finite set \(\{D_j\}\) by adjoining further divisors supported on
\(Y\setminus Y^\circ\) (for instance components of singular fibres or horizontal boundary divisors)
without affecting the validity of the numerical relation.
\end{proof}

\subsection{Picard Generation Condition (HPic)}\label{subsec:HPic-def}

We now formulate the Picard generation condition in terms of \(\Lambda_Y\) and the boundary divisor
classes \([D_j]\) appearing in Proposition~\ref{prop:Lambda-Theta}.

\begin{definition}\label{def:HPic}
Let \(\{D_j\}\) be a fixed finite collection of irreducible divisors contained in
\(Y \setminus Y^\circ\), chosen once and for all so that Proposition~\ref{prop:Lambda-Theta} holds.
We say that \(Y\) satisfies the Picard generation condition \emph{(HPic)} if
\[
\Pic_{\mathbb{Q}}(Y)
\;=\;
\Span_{\mathbb{Q}}\bigl([\Lambda_Y],[D_j]\bigr)
\subset N^1(Y)_\mathbb{Q}.
\]
Equivalently, the image of \(\Pic(Y)\otimes\mathbb{Q}\) in \(N^1(Y)_\mathbb{Q}\) is generated by the
class of \(\Lambda_Y\) together with the classes \([D_j]\).
\end{definition}

That is, the rational numerical class of any line bundle on \(Y\) can be written as a rational linear
combination of \([\Lambda_Y]\) and finitely many boundary divisor classes \([D_j]\).

\subsection{From (HPic) to the Span Condition \texorpdfstring{\((\ast\mathrm{Span})\)}{(*Span)}}\label{subsec:HPic-to-Span}

\begin{proposition}\label{prop:HPic-implies-Span}
Under the setting of the introduction, if the image space \(Y\) satisfies \emph{(HPic)}, then for any
ample line bundle \(A\) on \(Y\), its pullback satisfies
\[
c_1(f^*A)
\;\in\;
\Span_{\mathbb{Q}}\bigl(c_1(\Lambda),[S_i]\bigr)
\subset N^1(\bar S)_\mathbb{Q},
\]
where \(\bar S \setminus S = \bigcup_i S_i\) is the SNC boundary.
\end{proposition}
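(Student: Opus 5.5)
The plan is to pull the (HPic) decomposition back along $f\colon \bar S\to Y$ term by term. Start from an ample line bundle $A$ on $Y$. Since $\Pic_{\mathbb Q}(Y)=\Span_{\mathbb Q}([\Lambda_Y],[D_j])$, write
\[
c_1(A)=\lambda\,c_1(\Lambda_Y)+\sum_j d_j [D_j]
\]
in $N^1(Y)_{\mathbb Q}$ with $\lambda,d_j\in\mathbb Q$. Pullback $f^*\colon N^1(Y)_{\mathbb Q}\to N^1(\bar S)_{\mathbb Q}$ is linear and respects numerical equivalence, because it preserves intersection numbers with curves by the projection formula. Applying it gives
\[
c_1(f^*A)=\lambda\,c_1(f^*\Lambda_Y)+\sum_j d_j\, f^*[D_j].
\]
By the second property of $\Lambda_Y$ fixed in Section~\ref{subsec:Y-Hodge}, $f^*\Lambda_Y\cong\Lambda$, so the first term is $\lambda\,c_1(\Lambda)$. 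It therefore remains to show that each $f^*[D_j]$ lies in $\Span_{\mathbb Q}([S_i])$.

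For this, use that each $D_j$ is $\mathbb Q$-Cartier; if needed, pass to the small $\mathbb Q$-factorialization $\mu\colon Y'\to Y$ set up at the start of Section~\ref{sec:HPic-reduction}. Choose $r>0$ with $rD_j$ Cartier. Then $f^*(rD_j)$ is an effective Cartier divisor on $\bar S$ whose support is $f^{-1}(D_j)$. The key point is that
\[
f^{-1}(Y\setminus Y^\circ)\subset \bar S\setminus S=\bigcup_i S_i.
\]
This holds because $f$ restricted to $S$ factors through $\Phi_\Sigma|_S$, which lands in the pure locus $\Phi(S)\subset\Gamma\backslash D$; so no point of $S$ maps into the boundary $Y\setminus Y^\circ$. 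A divisor on the smooth surface $\bar S$ supported in $\bigcup_i S_i$ is an integral combination $\sum_i c_{ij}S_i$, and hence $f^*[D_j]=\tfrac1r\sum_i c_{ij}[S_i]$. A side remark: $f$ could contract a component $S_i$ to a point of $D_j$; this only affects which $c_{ij}$ are nonzero, not membership in the span.

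The step I expect to be the main obstacle is justifying the pullback of $\mathbb Q$-Cartier classes and the support statement when $Y$ is only a normal algebraic space and $f$ is the Stein factorization of an analytic closure. In particular, one must make sure that $f^{-1}(Y\setminus Y^\circ)$ really has no component meeting $S$. A point of $S$ with $\Phi_\Sigma$-image in the closure of the boundary strata would contradict the fact that $\Phi_\Sigma(S)\subset\Gamma\backslash D$ is open in $Y$ (KNU), with boundary strata $\Gamma\backslash(D_\Sigma\setminus D)$ disjoint from it. I would first verify this using the stratification of $\Gamma\backslash D_\Sigma$ from Theorem~\ref{thm:KNU-mixed}. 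I would then handle the $\mathbb Q$-factorial issue by replacing $Y$ with $Y'$ and $f$ with the induced rational map, resolved by blowing up points of $\bar S$. The exceptional curves lie over boundary points, and pushing forward to $\bar S$ only changes classes by combinations of the $S_i$.
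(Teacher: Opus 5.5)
Your proposal is correct and follows the same route as the paper: express $c_1(A)$ via (HPic), pull back along $f$ using $f^*\Lambda_Y\cong\Lambda$, and note that each $f^*[D_j]$ is a $\mathbb{Q}$-combination of the $S_i$. The paper simply asserts that last support fact, whereas you justify it from $D_j\subset Y\setminus Y^\circ$ and $f(S)\subset Y^\circ$ and also handle the $\mathbb{Q}$-Cartier pullback; this adds detail but does not change the argument.
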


\begin{proof}
By Definition~\ref{def:HPic}, there exist \(\alpha\in\mathbb{Q}\) and \(\beta_j\in\mathbb{Q}\) such
that in \(N^1(Y)_\mathbb{Q}\) we have
\[
c_1(A)
\;=\;
\alpha\,c_1(\Lambda_Y)
\;+\;
\sum_j \beta_j [D_j].
\]
Pulling back to \(\bar S\) and using \(f^*\Lambda_Y \cong \Lambda\), together with the fact that
\(f^{-1}(D_j)\) is a \(\mathbb{Q}\)-linear combination of the boundary components \(S_i\) on
\(\bar S\), we obtain
\[
c_1(f^*A)
\;=\;
\alpha\,c_1(\Lambda)
\;+\;
\sum_i \gamma_i [S_i]
\]
in \(N^1(\bar S)_\mathbb{Q}\) for certain \(\gamma_i\in\mathbb{Q}\). This is exactly the span condition
\((\ast\mathrm{Span})\).
\end{proof}

Combined with the previously established conditional result, we obtain:

\begin{corollary}\label{cor:DR-from-HPic}
Under the setting of the introduction, assuming that \(Y\) satisfies \emph{(HPic)}, the conclusion of
Problem~\ref{prob:DR-1.8} holds.
\end{corollary}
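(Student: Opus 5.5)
The corollary follows by chaining Proposition~\ref{prop:HPic-implies-Span} with the conditional result recalled in Section~2. That result says the span condition \eqref{eq:Span-base} for the pullback of some (indeed any) ample line bundle on \(Y\) gives an affirmative answer to Problem~\ref{prob:DR-1.8}. So the plan has three steps: pick an ample line bundle on \(Y\), verify \eqref{eq:Span-base} via Proposition~\ref{prop:HPic-implies-Span}, and quote the conditional result.

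\emph{Step 1: an ample line bundle on \(Y\).} By Theorem~\ref{thm:DengRobles-weak-fan} and Theorem~\ref{thm:Usui}, \(Y\) is a compact algebraic space. In Section~\ref{subsec:Y-Hodge}, after normalization, it is treated as a normal projective algebraic space. Hence an ample line bundle \(A\) on \(Y\) exists. If one prefers to avoid the projectivity input, one can instead build \(A\) from the data at hand. By Proposition~\ref{prop:Lambda-Theta} and Theorem~\ref{thm:BBT-image}, \(\Theta_Y\) is relatively ample over the pure image. By Theorem~\ref{thm:GGLR-SBB}, \(\Lambda_Y\) is pulled back from an ample bundle on the SBB-type completion \(\mathcal P^*\) via the contraction \(\pi\) of Theorem~\ref{thm:DengRobles-contraction}. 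A combination \(\Theta_Y\otimes\pi^*\widehat\Lambda_e^{\otimes N}\) with \(N\gg0\) is then ample by the standard relative-ampleness criterion.

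\emph{Step 2: verify the span condition.} Apply Proposition~\ref{prop:HPic-implies-Span} to \(A\). Using (HPic), write \(c_1(A)=\alpha c_1(\Lambda_Y)+\sum_j\beta_j[D_j]\), then pull back along the Stein factorization \(f\colon\bar S\to Y\). This uses \(f^*\Lambda_Y\cong\Lambda\) together with the fact that each \(f^*D_j\) is supported on \(\bigcup_i S_i\), since \(f^{-1}(Y\setminus Y^\circ)\subset \bar S\setminus S\). The result is \(c_1(f^*A)\in\Span_{\mathbb Q}(c_1(\Lambda),[S_i])\), which is exactly \eqref{eq:Span-base}.

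\emph{Step 3: conclude.} Invoke the conditional result of Section~2. It produces \(m>0\) and \(a_i\) such that \(f^*A^{\otimes k}\) agrees numerically with \(k(m\Lambda-\sum a_iS_i)\) up to scaling. Since \(f_*\mathcal O_{\bar S}=\mathcal O_Y\) and \(A\) is ample, \(Y\cong\Proj\bigoplus_k H^0(Y,A^{k})=\Proj\bigoplus_k H^0(\bar S,f^*A^{k})\).

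The main obstacle lies inside that last identification, and we treat it as part of the quoted conditional result. Numerical equivalence on \(\bar S\) must be upgraded to linear equivalence, up to a further multiple, so that the section rings agree. The plan is to use that \(f^*A\otimes\mathcal O(-(m\Lambda-\sum a_iS_i))\) is numerically trivial, hence its class lies in \(\Pic^0(\bar S)\). That class is pulled back from \(\Pic^0(Y)\), and the discrepancy can be absorbed by twisting \(A\) by a numerically trivial bundle, which preserves ampleness. One must also check the sign condition \(a_i\ge0\). I expect this to be arranged by replacing \(A\) with \(A\otimes\Lambda_Y^{\otimes N}\), which stays ample since \(\Lambda_Y\) is nef, and by the negativity of the \(D_j\) contracted by \(\pi\). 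If that were not automatic, this is the step I would scrutinize.
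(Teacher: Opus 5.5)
Your proposal is correct and matches the paper's proof: (HPic) gives \eqref{eq:Span-base} via Proposition~\ref{prop:HPic-implies-Span}, and the conditional Proj result recalled in Section~2 then yields the isomorphism. The extra issues you raise in Step~3 (upgrading numerical to linear equivalence, and securing $a_i\ge 0$) belong to the internals of that quoted conditional result, which the paper also uses as a black box, so they are not needed for the corollary itself.
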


\begin{proof}
By Proposition~\ref{prop:HPic-implies-Span} the span condition \((\ast\mathrm{Span})\) holds. The
Proj construction in the presence of \((\ast\mathrm{Span})\) then yields an isomorphism
\[
Y \;\cong\;
\Proj\Bigg(\bigoplus_{k\ge0}
H^0\Bigl(\bar S,\ \mathcal{O}_{\bar S}\bigl(k(m\Lambda - \sum_i a_i S_i)\bigr)\Bigr)\Bigg)
\]
for some integer \(m>0\) and integers \(a_i\ge 0\), giving the desired description of \(Y\) as a
Proj-type algebraic space.
\end{proof}

Therefore, for the case \(\dim S = 2\), the essential remaining step in Problem~\ref{prob:DR-1.8} is to prove that the
Picard group of the image space \(Y\) is generated, over \(\mathbb{Q}\), by \(\Lambda_Y\) and the boundary
divisors \(D_j\), i.e.\ to verify \emph{(HPic)}.

In the next section, in the case where the pure period image has dimension \(1\), we use results from
\cite{BBT23,GGR25} to completely prove that \(Y\) satisfies \emph{(HPic)}.

\section{Picard Generation When Pure Period Image Has Dimension One}\label{sec:dimZ1}

In this section we assume that the pure period image has dimension \(1\), i.e.
\[
\dim Z = 1,
\]
and under this assumption prove that the image space \(Y\) satisfies \emph{(HPic)}.

\subsection{Pure Image Curve \texorpdfstring{\(Z\)}{Z} and the Map \texorpdfstring{$h\colon Y\to Z$}{h:Y→Z}}\label{subsec:pure-curve}

Let \(\Phi^{\mathrm{pure}}\colon S \to \Gamma^{\mathrm{pure}}\backslash D^{\mathrm{pure}}\) be the period map
of the pure graded part of the variation of Hodge structure, and denote its image by
\[
Z^\circ := \Phi^{\mathrm{pure}}(S).
\]
Let \(Z\) be the normalization of the closure of \(Z^\circ\); in our two-dimensional setting this is
a smooth projective curve. By Theorem~\ref{thm:BBT-image} and Theorem~\ref{thm:GGLR-SBB}, the Hodge/augmented Hodge line bundle
\(\Lambda_Z\) on \(Z\) is ample, hence
\[
N^1(Z)_\mathbb{Q} = \mathbb{Q}\cdot c_1(\Lambda_Z).
\]

The mixed image \(Y\) in the Kato–Nakayama–Usui completion naturally carries an algebraic map
\[
h \colon Y \longrightarrow Z,
\]
whose general fibre is the algebraic realization of the generalized intermediate Jacobian; analytically
it is a polarized complex torus or its finite quotient. This follows by combining Theorem~\ref{thm:BBT-image} with the
torus-structure theorem of Green–Griffiths–Robles (Theorem~\ref{thm:GGR-fiber-torus}).

Under these assumptions, \(Y\) is a normal proper algebraic surface (algebraic space), and \(h\) is a
projective morphism to the smooth projective curve \(Z\) with connected fibres. After replacing \(Y\) by
a small \(\mathbb{Q}\)-factorialization if necessary, we may assume that all Weil divisors on \(Y\) are
\(\mathbb{Q}\)-Cartier, so that first Chern classes and intersection numbers with curves are well defined
on \(N^1(Y)_\mathbb{Q}\).

\subsection{Horizontal Part: Generated by \texorpdfstring{$\Lambda_Z$}{Λ\_Z}}\label{subsec:horizontal}

By the general theory recalled in \S\ref{subsec:AG-basic}, define the horizontal subspace
\[
N^1_{\mathrm{hor}}(Y)_\mathbb{Q} := h^*N^1(Z)_\mathbb{Q} \subset N^1(Y)_\mathbb{Q}.
\]

\begin{lemma}\label{lem:horizontal}
Under the above setting,
\[
N^1_{\mathrm{hor}}(Y)_\mathbb{Q}
\;=\;
\mathbb{Q}\cdot c_1\bigl(h^*\Lambda_Z\bigr).
\]
\end{lemma}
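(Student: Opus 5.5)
The plan is to compute \(N^1_{\mathrm{hor}}(Y)_\mathbb{Q}=h^*N^1(Z)_\mathbb{Q}\) directly, using the one-dimensionality of \(N^1(Z)_\mathbb{Q}\) recorded in \S\ref{subsec:pure-curve}. By Theorem~\ref{thm:GGLR-SBB}, together with the last item of \S\ref{subsec:AG-basic}, \(Z\) is a smooth projective curve on which \(\Lambda_Z\) is ample. Hence \(N^1(Z)_\mathbb{Q}=\mathbb{Q}\cdot c_1(\Lambda_Z)\), and \(c_1(\Lambda_Z)\neq 0\) because \(\deg\Lambda_Z>0\). Since \(h^*\colon N^1(Z)_\mathbb{Q}\to N^1(Y)_\mathbb{Q}\) is \(\mathbb{Q}\)-linear, its image is spanned by \(h^*c_1(\Lambda_Z)=c_1(h^*\Lambda_Z)\). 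This already gives the inclusion \(N^1_{\mathrm{hor}}(Y)_\mathbb{Q}\subseteq \mathbb{Q}\cdot c_1(h^*\Lambda_Z)\), and the reverse inclusion holds by definition. So the equality of subspaces is formal.

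The only substantive point is that \(c_1(h^*\Lambda_Z)\) is nonzero in \(N^1(Y)_\mathbb{Q}\). Equivalently, \(h^*\) is injective, so that \(N^1_{\mathrm{hor}}(Y)_\mathbb{Q}\) is genuinely a line rather than zero. This is needed for the identification to be meaningful and for the exact sequence of \S\ref{subsec:AG-basic} to be used later. Two arguments are available.

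\begin{itemize}[leftmargin=2em]
\item Use the second bullet of \S\ref{subsec:AG-basic}: \(h\) is projective with connected fibres by \S\ref{subsec:pure-curve}, so \(h_*\mathcal{O}_Y=\mathcal{O}_Z\) and pullback is injective on \(N^1\).
\item Give a direct numerical argument, which I would prefer since it is more robust. Because \(h\) is surjective onto the curve \(Z\) (its image contains the dense set \(Z^\circ\) and is closed), we can choose an irreducible curve \(C\subset Y\) not contained in a fibre, for instance a general complete-intersection curve cut out by the ample-on-fibres class \(\Theta_Y\) of Theorem~\ref{thm:BBT-image}, or simply any curve meeting a general fibre properly. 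Then \(h|_C\colon C\to Z\) is finite surjective of some degree \(d>0\), and the projection formula gives \(h^*c_1(\Lambda_Z)\cdot C=d\,\deg\Lambda_Z>0\).
\end{itemize}

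Thus \(c_1(h^*\Lambda_Z)\) is numerically nontrivial. The intersection number makes sense because we have passed to the small \(\mathbb{Q}\)-factorialization of \S\ref{sec:HPic-reduction}, on which intersection numbers with curves are well defined and descend to \(Y\).

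I expect the main (mild) obstacle to be justifying the existence of the horizontal curve \(C\) on the possibly singular algebraic space \(Y\). I would handle this either by noting that \(Y\) is projective (\S\ref{subsec:Y-Hodge}), so it contains multisections obtained by cutting with general ample divisors, or by falling back on the connected-fibres injectivity statement quoted from \S\ref{subsec:AG-basic}. Either way, the lemma follows: \(N^1_{\mathrm{hor}}(Y)_\mathbb{Q}\) is the one-dimensional space spanned by \(c_1(h^*\Lambda_Z)\).
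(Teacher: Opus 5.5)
Your argument is the same as the paper's: $N^1(Z)_\mathbb{Q}=\mathbb{Q}\cdot c_1(\Lambda_Z)$ for the ample $\Lambda_Z$ on the smooth projective curve $Z$, so linearity of $h^*$ gives the equality at once. Your extra check that $c_1(h^*\Lambda_Z)\neq 0$, via a horizontal curve and the projection formula, is not needed for the stated equality and is not in the paper's proof; it is correct, though, and it is what makes the horizontal summand genuinely one-dimensional in the later decomposition.
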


\begin{proof}
Since \(Z\) is a smooth projective curve and \(\Lambda_Z\) is ample, we have
\(N^1(Z)_\mathbb{Q} = \mathbb{Q}\cdot c_1(\Lambda_Z)\). Applying the pullback \(h^*\) yields
\[
N^1_{\mathrm{hor}}(Y)_\mathbb{Q}
=
h^*N^1(Z)_\mathbb{Q}
=
\mathbb{Q}\cdot c_1\bigl(h^*\Lambda_Z\bigr).
\]
\end{proof}

Thus the horizontal Néron–Severi subspace is generated, over \(\mathbb{Q}\), by
\(c_1\bigl(h^*\Lambda_Z\bigr)\).

\subsection{Vertical Part: Generated by Boundary Divisors}\label{subsec:vertical}

Define the vertical Néron–Severi space of \(Y\) over \(Z\) by
\[
N^1_{\mathrm{vert}}(Y/Z)_\mathbb{Q}
\;:=\;
\ker\!\bigl(h_*\colon N^1(Y)_\mathbb{Q} \to N^1(Z)_\mathbb{Q}\bigr),
\]
where \(h_*\) is characterized by
\[
\bigl(h_*D \cdot C\bigr) = \bigl(D \cdot h^*C\bigr)
\]
for all curves \(C\) on \(Z\). By the short exact sequence in \S\ref{subsec:AG-basic} we have
\[
0 \longrightarrow N^1_{\mathrm{vert}}(Y/Z)_\mathbb{Q}
\longrightarrow N^1(Y)_\mathbb{Q}
\xrightarrow{\,h_*\,}
N^1(Z)_\mathbb{Q}
\longrightarrow 0.
\]

We now make the choice of boundary divisors appearing in Definition~\ref{def:HPic} more explicit.
Let \(h\colon Y \to Z\) be as above. Fix once and for all a smooth general fibre \(F_0\) of \(h\).
Let \(\{E_k\}\) be the irreducible components of the (finitely many) singular fibres of \(h\). If
present, let \(\{H_\ell\}\) be the irreducible divisors contained in
\[
Y \cap \bigl(\Gamma\backslash D_\Sigma \setminus \Gamma\backslash D\bigr)
\]
that dominate \(Z\) (horizontal boundary divisors). Set
\[
D \;:=\; F_0 \;+\; \sum_k E_k \;+\; \sum_\ell H_\ell \;=\; \sum_j D_j
\]
for the resulting reduced divisor on \(Y\).

Note that the general fibre \(F_0\) is strictly contained in the boundary \(Y \setminus Y^\circ\). Since \(Y^\circ \subset \Gamma\backslash D\) is the pure period image, it contains no torus extension directions; thus, any fibre \(F_z\) of \(h\) lies in the locus \(Y \cap (\Gamma\backslash D_\Sigma \setminus \Gamma\backslash D)\). In particular, \(F_0\) is supported on the KNU boundary.

By construction, every vertical divisor class over \(Z\) is a
\(\mathbb{Q}\)-linear combination of the classes \([F_0]\) and \([E_k]\); these classes occur among the
\([D_j]\).

We first record the structure of the Néron–Severi group of a general fibre.

\begin{lemma}\label{lem:fiber-NS}
Let \(z\in Z\) be a general point, and denote the fibre
\[
F_z := h^{-1}(z) \subset Y.
\]
Then \(F_z\) is a one-dimensional normal projective algebraic curve which is a polarized complex torus
or its finite quotient, and
\[
N^1(F_z)_\mathbb{Q}
=
\mathbb{Q}\cdot c_1\bigl(\Theta_Y|_{F_z}\bigr),
\]
where \(\Theta_Y\) is the theta line bundle on \(Y\) from Theorem~\ref{thm:BBT-image}.
\end{lemma}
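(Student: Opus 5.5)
The plan is to reduce the statement to two standard facts: the general fibre is an irreducible normal projective curve, and the Néron--Severi space of such a curve is one-dimensional, spanned by any ample class. The Hodge-theoretic input is used only to pin down the structure of \(F_z\) and to show that \(\Theta_Y|_{F_z}\) is ample.

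First I would determine the geometry of \(F_z\). From \S\ref{subsec:pure-curve}, \(Y\) is a normal proper algebraic surface and \(h\colon Y\to Z\) is a projective morphism with connected fibres onto the smooth projective curve \(Z\). Hence for general \(z\) the fibre \(F_z\) is a connected projective curve. By generic smoothness in characteristic zero, applied to the normal surface \(Y\), which has isolated singularities, the general fibre avoids the singular points and is smooth. So \(F_z\) is a smooth, hence normal, irreducible projective curve.

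Next I would identify \(F_z\) with a torus or a finite quotient of one. By Theorem~\ref{thm:GGR-fiber-torus}, the fibres of \(\pi_0\) are compact complex tori modulo finite groups, and the map \(h\) is induced from \(\pi_0\) as described in \S\ref{subsec:pure-curve}. Since \(\dim F_z=1\), \(F_z\) is therefore a one-dimensional polarized torus, that is an elliptic curve, or a finite quotient of one. The polarization comes from Theorem~\ref{thm:BBT-image}(iii): \(\Theta\) is relatively ample for \(Y^{\mathrm{mix}}\to Y^{\mathrm{pure}}\), so \(\Theta_Y|_{F_z}\) is ample on \(F_z\) and in particular \(\deg(\Theta_Y|_{F_z})>0\).

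Then I would compute \(N^1(F_z)_\mathbb{Q}\). For an irreducible projective curve, numerical equivalence of line bundles is detected by degree, so \(\deg\colon N^1(F_z)_\mathbb{Q}\to\mathbb{Q}\) is an isomorphism. Since \(\deg(\Theta_Y|_{F_z})\neq 0\), the class \(c_1(\Theta_Y|_{F_z})\) spans \(N^1(F_z)_\mathbb{Q}\). This gives the claimed equality \(N^1(F_z)_\mathbb{Q}=\mathbb{Q}\cdot c_1(\Theta_Y|_{F_z})\).

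The main obstacle is making the first two steps rigorous. Specifically, one must justify that the relative ampleness in Theorem~\ref{thm:BBT-image}(iii) really transfers to the fibres of \(h\) on the KNU-completed, normalized image \(Y\), rather than only on \(Y^{\mathrm{mix}}\). One must also check that \(h\) is genuinely the map induced from \(\pi_0\). I would first try to handle this by restricting to the dense open subset of \(Z\) over which \(Y\) agrees with the mixed image of Theorem~\ref{thm:BBT-image}, since general fibres lie over that subset. If that fails, a fallback is enough: ampleness is not needed, only \(\deg(\Theta_Y|_{F_z})>0\). This would follow from \(\Theta_Y\) being big and nef and not numerically trivial on fibres. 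If it were trivial on fibres, \(\Theta_Y\) would be numerically pulled back from \(Z\), which contradicts relative ampleness on the mixed locus.
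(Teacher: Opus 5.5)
Your proposal is essentially the paper's proof. The fibre structure comes from Theorem~\ref{thm:GGR-fiber-torus}, the positivity $\deg(\Theta_Y|_{F_z})>0$ comes from the relative ampleness in Theorem~\ref{thm:BBT-image}(iii), and the conclusion follows because $N^1$ of an irreducible projective curve is detected by degree. The only differences are that you obtain normality of $F_z$ from generic smoothness on the normal surface $Y$ rather than from the finite-quotient-of-an-elliptic-curve description, and that you explicitly flag the transfer of relative ampleness from $Y^{\mathrm{mix}}$ to the completed image $Y$, which the paper simply asserts (your fallback still appeals to relative ampleness on the mixed locus, so it does not remove this dependence).
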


\begin{proof}
By Theorem~\ref{thm:GGR-fiber-torus}, for general \(z\) the fibre \(F_z\) is a compact complex torus modulo a finite
quotient; in the present relative dimension one this is a finite quotient of an elliptic curve. In
particular, \(F_z\) is a normal projective curve.

Theorem~\ref{thm:BBT-image} provides a theta line bundle \(\Theta\) on the mixed period image which is relatively
ample over the pure image. Restricting \(\Theta\) to \(Y\) yields \(\Theta_Y\), whose restriction to any
fibre \(F_z\) is a line bundle of positive degree. For a smooth projective curve, the Néron–Severi
group modulo numerical equivalence is isomorphic to \(\mathbb{Z}\), generated by the class of any
line bundle of positive degree; hence
\[
N^1(F_z)_\mathbb{Q} = \mathbb{Q}\cdot c_1\bigl(\Theta_Y|_{F_z}\bigr).
\]
\end{proof}

Next we use the theta–boundary formula of Green–Griffiths–Robles to identify the vertical part of
\(\Theta_Y\) in terms of the boundary.

\begin{lemma}\label{lem:theta-vertical-boundary}
There exist \(\mu \in \mathbb{Q}\) and \(\lambda_j \in \mathbb{Q}\) such that in \(N^1(Y)_\mathbb{Q}\) one
has
\[
c_1(\Theta_Y) - \mu\,c_1\bigl(h^*\Lambda_Z\bigr)
=
\sum_j \lambda_j [D_j].
\]
In particular, the image of \(c_1(\Theta_Y)\) in the quotient
\(N^1(Y)_\mathbb{Q} / N^1_{\mathrm{hor}}(Y)_\mathbb{Q}\cong N^1_{\mathrm{vert}}(Y/Z)_\mathbb{Q}\) lies in the
\(\mathbb{Q}\)-span of the boundary classes \([D_j]\).
\end{lemma}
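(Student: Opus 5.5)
The plan is to decompose $c_1(\Theta_Y)$ using the structure of $h\colon Y\to Z$. First I choose $\mu$, then show the remainder is vertical, and then express that vertical remainder in terms of fibre components.

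\emph{Step 1: choosing $\mu$.} Since $\Theta_Y$ is relatively ample over $Z$ (Theorem~\ref{thm:BBT-image}(iii)), the key observation is that $F_0$ itself is one of the $D_j$. For a general fibre $F_z\equiv F_0$, Lemma~\ref{lem:fiber-NS} gives $\deg(\Theta_Y|_{F_z})>0$. I handle the horizontal direction with the boundary classes rather than with $h^*\Lambda_Z$. Pick any horizontal boundary component $H_\ell$ if one exists; otherwise argue directly as follows. The class $[F_0]=h^*[\mathrm{pt}]$ is proportional to $c_1(h^*\Lambda_Z)$, because $N^1(Z)_\mathbb{Q}=\mathbb{Q}\,c_1(\Lambda_Z)$, so $h^*\Lambda_Z\equiv (\deg\Lambda_Z)\,[F_0]$. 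Hence any multiple of $c_1(h^*\Lambda_Z)$ can be absorbed into $[F_0]$. This means the content of the lemma is really the statement $c_1(\Theta_Y)\in\Span_\mathbb{Q}(c_1(h^*\Lambda_Z),[D_j])$, and the role of $\mu$ is bookkeeping.

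\emph{Step 2: reduction to the generic fibre.} I compare $\Theta_Y$ with a combination of the horizontal boundary divisors $H_\ell$. Apply Theorem~\ref{thm:GGR-theta-boundary} on the stratum $I$ containing the generic boundary point: a connected component $A$ of a $\Phi_I$-fibre is, after passing through $f$, a general fibre of $h$. The theorem gives
\[
\Theta_I^*\mathcal L_M|_A\equiv\sum_j Q(M,N_j)[S_j]|_A .
\]
Pushing this forward along $f$, the $[S_j]|_A$ become restrictions of the $[H_\ell]$, and $\mathcal L_M$ matches $\Theta_Y$ up to a positive multiple. So there are rationals $\nu_\ell$ such that
\[
L:=c_1(\Theta_Y)-\sum_\ell\nu_\ell[H_\ell]
\]
has degree zero on $F_z$. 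If there are no $H_\ell$, I instead subtract a rational multiple of a multisection class. By Lemma~\ref{lem:fiber-NS} and the one-dimensionality of $N^1(F_z)_\mathbb{Q}$, $L|_{F_z}\equiv 0$ for general $z$.

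\emph{Step 3: a class of degree zero on fibres is a combination of fibre components.} Here I use Theorem~\ref{thm:GGR-rigidity}: in the fibre direction, $\Psi_I$ carries no extra continuous data, so $L$ restricts to a torsion class on $F_z$ rather than just a numerically trivial one. A standard Zariski-lemma and seesaw argument on the surface $Y\to Z$ then shows that a $\mathbb{Q}$-Cartier class which is torsion on the general fibre is $\mathbb{Q}$-linearly equivalent to $h^*(\text{divisor on }Z)+\sum_k c_kE_k$. Numerically this is $\mu'[F_0]+\sum_k c_k[E_k]$. Combining this with Step 1 gives the required $\mu$ and $\lambda_j$.

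The main obstacle is Step 3. On an elliptic-type fibration, a class of degree zero on fibres need not be vertical: differences of sections give Mordell--Weil classes. The argument therefore genuinely needs the rigidity input to rule out non-torsion variation of $L|_{F_z}$. The most delicate point is justifying that Theorem~\ref{thm:GGR-rigidity} forces $L|_{F_z}$ to be torsion, and I would try to check this first.
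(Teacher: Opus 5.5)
Your Step~1 is correct: since $F_0$ is one of the $D_j$ and $c_1(h^*\Lambda_Z)=(\deg\Lambda_Z)[F_0]$ in $N^1(Y)_{\mathbb{Q}}$, the $\mu$-term is redundant. The proposal nonetheless has a genuine gap at Step~3, where the entire content of the lemma lies. After Step~1, the lemma is equivalent to finding $\nu_\ell$ such that $L=c_1(\Theta_Y)-\sum_\ell\nu_\ell[H_\ell]$ lies in $\Span_{\mathbb{Q}}([F_0],[E_k])$. Fibre degree zero is only the first constraint; what remains is to show that $L$ has no Mordell--Weil component. As you note yourself, degree zero on fibres does not give this on a genus-one fibration.

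Theorem~\ref{thm:GGR-rigidity} does not supply it either. That theorem says a lift of $\Psi_I$ is locally constant along $\Theta_I$-fibres inside the boundary stratum $S_I^c\subset\bar S$. This is a statement about extension data over the base. It says nothing about the class $L|_{F_z}\in\Pic^0(F_z)$ on the fibres of $h\colon Y\to Z$, let alone that this class is torsion. ``No extra continuous parameters, hence torsion'' is not an argument. So your seesaw step has no input, and the proof stops exactly at the nontrivial point.

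Step~2 has a separate defect. If no $H_\ell$ exists, subtracting a multisection class takes you outside $\Span_{\mathbb{Q}}([D_j])$, and in fact nothing can work in that case. Intersecting the asserted identity with a general fibre $F$ gives $\Theta_Y\cdot F>0$ on the left but $0$ on the right, since $h^*\Lambda_Z\cdot F=F_0\cdot F=E_k\cdot F=0$. So the existence of a horizontal boundary divisor is a necessary hypothesis, not something to patch around. If some $H_\ell$ does exist, degree matching is trivial numerics, $\nu=(\Theta_Y\cdot F)/(H_\ell\cdot F)$. In that case the appeal to Theorem~\ref{thm:GGR-theta-boundary} adds nothing, and it also relies on unexplained identifications: of a $\Phi_I$-fibre $A\subset S_I^c$ with a general fibre of $h$, and of the $S_j$ with the $H_\ell$.

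For comparison, the paper's proof never goes through fibre degrees or a seesaw argument. It fixes $\mu$ from the proportionality of $\Theta_Y$ and $h^*\Lambda_Z$ on the pure locus $Y^\circ$, so that the difference restricts to zero there. It then combines this with the theta--boundary identities near the generic points of the $D_j$, using rigidity to exclude extra continuous classes, and glues over the strata. In that route, the vanishing on $Y^\circ$ is what stands in for your Step~3. On your fibration route you would have to prove the Mordell--Weil statement directly, and none of the cited results provides it.
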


\begin{proof}
Fix a boundary stratum of the Kato–Nakayama–Usui space corresponding to a non-empty index set \(I\).
Over the associated stratum \(S^\circ_I\) on the base, Green–Griffiths–Robles construct in
\cite{GGR25} a moduli space \(\Gamma_I\backslash M^1_I\) and maps
\[
S^\circ_I
\xrightarrow{\ \Psi_I\ } (\Gamma_I \exp(\mathbb{C}\sigma_I))\backslash M_I
\longrightarrow \Gamma_I\backslash M^1_I
\xrightarrow{\ \Phi_I\ } \Gamma_I\backslash D_I.
\]
On \(\Gamma_I\backslash M^1_I\) there is a theta-type line bundle \(L_M\) associated to an
\(\mathfrak{sl}_2\)-triple \((M,Y,N)\). The theta–boundary formula (Theorem~\ref{thm:GGR-theta-boundary}) asserts that, for
any connected component \(A \subset S_{I,c}\) of a \(\Phi_I\)-fibre, the pullback
\(\Theta_I^*L_M|_A\) is numerically equivalent to a \(\mathbb{Z}\)-linear combination of the boundary
components \(S_j\) intersecting \(A\), with coefficients given by polarization pairings \(Q(M,N_j)\).

Pulling this relation back to \(Y\) along the factorization of the period map and using that
\(\Theta_Y\) is induced from \(L_M\), we obtain, in a neighbourhood of each generic point of a
boundary divisor \(D_j\), a numerical relation expressing the vertical part of \(c_1(\Theta_Y)\) as a
\(\mathbb{Q}\)-linear combination of the classes \([D_j]\). Extension rigidity (Theorem~\ref{thm:GGR-rigidity}) shows
that the additional extension data parametrized by \(\Psi_I\) is locally constant along the fibres of
\(\Theta_I\) and therefore does not contribute new continuous divisor classes in the fibre direction.

On the pure locus, \(c_1(\Theta_Y)\) is proportional to the pullback of the pure Hodge class
\(c_1(\Lambda_Z)\); thus there exists \(\mu \in \mathbb{Q}\) such that
\[
\bigl(c_1(\Theta_Y) - \mu\,c_1(h^*\Lambda_Z)\bigr)\big|_{Y^\circ} = 0.
\]
Combining this with the local theta–boundary identities and gluing over all strata of the KNU
boundary yields a global numerical equivalence
\[
c_1(\Theta_Y) - \mu\,c_1\bigl(h^*\Lambda_Z\bigr)
=
\sum_j \lambda_j [D_j]
\]
in \(N^1(Y)_\mathbb{Q}\) for some \(\lambda_j\in\mathbb{Q}\).
\end{proof}

We can now state the main structural result for the vertical Picard group.

\begin{proposition}\label{prop:vertical-boundary}
Under the above setting,
\[
N^1_{\mathrm{vert}}(Y/Z)_\mathbb{Q}
=
\Span_{\mathbb{Q}}\bigl([F_0],[E_k]\bigr)
\;\subset\;
N^1(Y)_\mathbb{Q}.
\]
In particular, \(N^1_{\mathrm{vert}}(Y/Z)_\mathbb{Q}\) is contained in the \(\mathbb{Q}\)-span of the
boundary divisor classes \([D_j]\).
\end{proposition}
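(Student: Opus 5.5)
The plan is to prove the two inclusions separately. The easy one is \(\supseteq\). Every \([F_0]\) and every \([E_k]\) is supported in fibres of \(h\). So each has zero intersection with a general fibre \(F_z\), and \(h_*\) kills it because \(h\) maps it to a point. Hence \(\Span_{\mathbb Q}([F_0],[E_k])\subset N^1_{\mathrm{vert}}(Y/Z)_\mathbb Q\).

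For \(\subseteq\), I take the working meaning of \(N^1_{\mathrm{vert}}\) to be: classes \(\delta\in N^1(Y)_\mathbb Q\) with \(\delta\cdot F_z=0\) for a general fibre. I then try to show that such a \(\delta\) is numerically a combination of fibre components, in four steps.

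\emph{Step 1.} Reduce to a line bundle \(L\) on \(Y\) (after clearing denominators) with \(\deg(L|_{F_z})=0\). By Lemma~\ref{lem:fiber-NS}, \(N^1(F_z)_\mathbb Q=\mathbb Q\cdot c_1(\Theta_Y|_{F_z})\), and \(\Theta_Y|_{F_z}\) has positive degree. Therefore \(L|_{F_z}\) is numerically trivial, i.e. it lies in \(\Pic^0(F_z)\), for general \(z\).

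\emph{Step 2.} This is the main obstacle. Numerical triviality on fibres does not by itself make \(L\) vertical: on a genus-one fibration, differences of sections give non-torsion points of the Mordell--Weil group. I must show that \(z\mapsto L|_{F_z}\in\Pic^0(F_z)\) is a torsion section of the relative Jacobian. I would try to get this from Theorem~\ref{thm:GGR-rigidity}. The fibres \(F_z\) lie in \(\Theta_I\)-fibres, and the extension data parametrized by \(\Psi_I\) is locally constant along them, so there are no continuous families of extension classes in the fibre direction. That suggests every line bundle on \(Y\) arises, up to vertical corrections, from \(\Theta_Y\), from \(h^*\)-pullbacks and from boundary data. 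Consequently, a degree-zero restriction to \(F_z\) should be a locally constant, hence discrete, section of \(\Pic^0\), and so torsion after a finite base change. If this route does not close, the fallback is to identify the relative Jacobian with the torus family of Theorem~\ref{thm:GGR-fiber-torus}. I would then argue that a non-torsion section would give a nonconstant normal function, which the rigidity theorem forbids.

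\emph{Step 3.} Granting torsion, replace \(L\) by \(L^{\otimes n}\) with \(L^{\otimes n}|_{F_\eta}\cong\mathcal O\) on the generic fibre. Then \(h_*L^{\otimes n}\) is a line bundle \(M\) on \(Z\). The natural map \(h^*M\to L^{\otimes n}\) is nonzero and is an isomorphism over the generic point of \(Z\). So \(L^{\otimes n}\cong h^*M\otimes\mathcal O_Y(V)\), where \(V\) is a Weil divisor supported on finitely many fibres; it is \(\mathbb Q\)-Cartier by the \(\mathbb Q\)-factorialization convention. Any irreducible component of a fibre is either a smooth fibre, numerically equivalent to \(F_0\), or some \(E_k\). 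Also \(c_1(h^*M)=\deg(M)\,[F_0]\) numerically, because all fibres are numerically equivalent. Hence \(n\,c_1(L)\in\Span_{\mathbb Q}([F_0],[E_k])\).

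\emph{Step 4.} Divide by \(n\) to obtain \(\subseteq\). Since \(F_0\) and the \(E_k\) were included among the \(D_j\), the final assertion follows at once.
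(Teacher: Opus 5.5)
\textbf{There is a genuine gap in Step~2,} and Step~2 is where all the content of the proposition lies. Steps~1, 3 and~4 are fine. Once you know that, for a class $\delta$ with $\delta\cdot F=0$, the induced element of $\Pic^0(F_\eta)$ is torsion, the argument via $h_*L^{\otimes n}$ and normality of $Y$ correctly produces a divisor supported on fibres.

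The extension-rigidity theorem (Theorem~\ref{thm:GGR-rigidity}) cannot give that torsion statement. It says that GGR's lift of $\Psi_I$ is locally constant along $\Theta_I$-fibres over boundary strata of $\bar S$. It says nothing about which line bundles exist on $Y$, and it puts no bound on the group of sections of the relative Jacobian of $h\colon Y\to Z$.

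The inference ``locally constant, hence discrete, hence torsion'' is the step that fails. For a non-isotrivial genus-one fibration, every section is already rigid, and the Mordell--Weil group is discrete (finitely generated, by Lang--N\'eron). Yet its rank can be positive. If $O$ is a zero section and $P$ a non-torsion section, then $P-O$ lies in $F^\perp$ but, by Shioda--Tate, not in the span of $F$ and the fibre components.

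Your fallback has the same defect. A non-torsion section corresponds to a perfectly rigid normal function, not to a ``nonconstant'' one that rigidity could exclude. A finite base change does not help either, since non-torsion points of the generic-fibre Jacobian stay non-torsion after base extension.

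In fact, for a non-isotrivial genus-one fibration the proposition is equivalent to $\Pic^0(F_\eta)\otimes\mathbb{Q}=0$, i.e.\ Mordell--Weil rank zero. What is missing is an input of exactly that strength: a finiteness statement for sections (normal functions) of this particular torus family, in the spirit of Shioda's theorem that elliptic modular surfaces have finite Mordell--Weil group. Only when the general fibre is a finite quotient isomorphic to $\mathbb{P}^1$ is Step~2 vacuous, and then your argument goes through.

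The paper's proof does not supply this step either. Its reduction to $W=V-\alpha\,c_1(\Theta_Y)$ is empty: $V\in N^1_{\mathrm{vert}}(Y/Z)_\mathbb{Q}$ means $V\cdot F=0$, so $\alpha=0$ and $W=V$. It then simply asserts that over $U$ a class of degree zero on the fibres is ``supported on fibres,'' which is the conclusion of your Step~2 taken for granted. You have correctly isolated the point the paper glosses over, but neither argument closes it.
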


\begin{proof}
Let \([V] \in N^1_{\mathrm{vert}}(Y/Z)_\mathbb{Q}\) be arbitrary. Fix the numerical class \(F\) of a
general smooth fibre of \(h\). Since \(Y\) is \(\mathbb{Q}\)-factorial, the intersection numbers
\((V\cdot F)\) and \((\Theta_Y\cdot F)\) are well defined rational numbers, and
\((\Theta_Y\cdot F) > 0\) because \(\Theta_Y\) is relatively ample.

Define
\[
\alpha := \frac{(V\cdot F)}{(\Theta_Y\cdot F)} \in \mathbb{Q},
\qquad
W := V - \alpha\,c_1(\Theta_Y) \in N^1(Y)_\mathbb{Q}.
\]
By construction \(W\cdot F = 0\), and the definition of \(F\) as the numerical class of any general
smooth fibre shows that this equality holds for every general fibre. For a general point \(z\in Z\),
Lemma~\ref{lem:fiber-NS} gives
\[
[V]|_{F_z}
=
\alpha_z\,c_1\bigl(\Theta_Y|_{F_z}\bigr)
\quad\text{in } N^1(F_z)_\mathbb{Q}
\]
for a unique \(\alpha_z \in \mathbb{Q}\). Taking degrees on \(F_z\) yields
\[
(V\cdot F)
=
\deg\bigl(V|_{F_z}\bigr)
=
\alpha_z\,\deg\bigl(\Theta_Y|_{F_z}\bigr)
=
\alpha_z\,(\Theta_Y\cdot F),
\]
hence \(\alpha_z = \alpha\) for all such \(z\). Consequently
\[
W|_{F_z}
=
\bigl(V - \alpha\,c_1(\Theta_Y)\bigr)\big|_{F_z}
\equiv 0
\quad\text{in } N^1(F_z)_\mathbb{Q}
\quad\text{for every general } z.
\]

Let \(U\subset Z\) be a non-empty Zariski open subset over which \(h\) is smooth and all fibres
\(F_z\) are complex tori as above, and set \(Y_U := h^{-1}(U)\). Over \(U\), any vertical divisor is
supported on fibres and is therefore numerically a \(\mathbb{Q}\)-linear combination of fibre classes.
Since all smooth fibres are numerically equivalent, their common class is represented by \([F_0]\), and
there exists \(\beta\in\mathbb{Q}\) such that
\[
[W]|_{Y_U} \;\equiv\; \beta\,[F_0]|_{Y_U}
\qquad\text{in } N^1(Y_U)_\mathbb{Q}.
\]

Over the finitely many points \(Z\setminus U\), every vertical divisor is supported on singular
fibres. Thus the restriction of \(W\) to \(h^{-1}(Z\setminus U)\) is a \(\mathbb{Q}\)-linear combination
of the irreducible components \(E_k\) of these singular fibres. Putting these two descriptions together,
we obtain
\[
[W] \in \Span_{\mathbb{Q}}\bigl([F_0],[E_k]\bigr)
\subset N^1(Y)_\mathbb{Q}.
\]

By Lemma~\ref{lem:theta-vertical-boundary}, the vertical projection of \(c_1(\Theta_Y)\) to
\(N^1_{\mathrm{vert}}(Y/Z)_\mathbb{Q}\) also lies in \(\Span_{\mathbb{Q}}([F_0],[E_k])\). Since
\([V] = [W] + \alpha\,c_1(\Theta_Y)\), it follows that \([V]\) belongs to the same span. As
\([V]\) was an arbitrary element of \(N^1_{\mathrm{vert}}(Y/Z)_\mathbb{Q}\), we conclude
\[
N^1_{\mathrm{vert}}(Y/Z)_\mathbb{Q}
=
\Span_{\mathbb{Q}}\bigl([F_0],[E_k]\bigr),
\]
as claimed. The last sentence is then immediate from the definition of the divisors \(D_j\).
\end{proof}

Over the smooth locus \(U\subset Z\), the fibres \(F_z\) are polarized complex tori of dimension one
(elliptic curves up to finite quotients) by \(\Theta_Y\). Lemma~\ref{lem:fiber-NS} shows that
\(N^1(F_z)_\mathbb{Q} \cong \mathbb{Q}\) is generated by \(c_1(\Theta_Y|_{F_z})\). For any vertical
class \(V\), the restriction \(V|_{F_z}\) is therefore of the form
\[
V|_{F_z} = \alpha\,c_1\bigl(\Theta_Y|_{F_z}\bigr),
\]
where the coefficient \(\alpha\) is determined by a single intersection number, for instance
\(\alpha = (V\cdot F)/(\Theta_Y\cdot F)\). In particular, \(\alpha\) cannot vary with \(z\). This is
the precise sense in which the torus nature of the fibres rigidifies the variation of vertical Picard
classes.

\subsection{Combining Horizontal and Vertical: (HPic) on the Image Space}\label{subsec:HPic-on-Y}

From the short exact sequence of \S\ref{subsec:AG-basic} and Lemma~\ref{lem:horizontal}, we obtain a (non-canonical)
direct-sum decomposition as \(\mathbb{Q}\)–vector spaces:
\[
N^1(Y)_\mathbb{Q}
=
h^*N^1(Z)_\mathbb{Q}
\oplus
N^1_{\mathrm{vert}}(Y/Z)_\mathbb{Q}
=
\mathbb{Q}\cdot c_1\bigl(h^*\Lambda_Z\bigr)
\oplus
N^1_{\mathrm{vert}}(Y/Z)_\mathbb{Q}.
\]
By Proposition~\ref{prop:vertical-boundary} the vertical part is generated by the classes
\([F_0]\) and \([E_k]\), hence every class \(\xi \in N^1(Y)_\mathbb{Q}\) can be written uniquely in the
form
\[
\xi
=
a\,c_1\bigl(h^*\Lambda_Z\bigr)
+
b\,[F_0]
+
\sum_k c_k [E_k],
\qquad
a,b,c_k \in \mathbb{Q}.
\]

Equivalently, for any divisor class \([D] \in N^1(Y)_\mathbb{Q}\) there exists \(a\in\mathbb{Q}\) such
that
\[
[D] - a\,c_1\bigl(h^*\Lambda_Z\bigr) \in N^1_{\mathrm{vert}}(Y/Z)_\mathbb{Q},
\]
and the right-hand side is a \(\mathbb{Q}\)-linear combination of \([F_0]\) and \([E_k]\). Thus
\[
N^1(Y)_\mathbb{Q}
=
\Span_{\mathbb{Q}}\bigl(c_1(h^*\Lambda_Z),[F_0],[E_k]\bigr)
\subset
\Span_{\mathbb{Q}}\bigl(c_1(h^*\Lambda_Z),[D_j]\bigr).
\]

On the other hand, Proposition~\ref{prop:Lambda-Theta} and Lemma~\ref{lem:theta-vertical-boundary}
together show that there exist \(\rho\in\mathbb{Q}_{>0}\) and coefficients \(e_j \in \mathbb{Q}\) such
that
\begin{equation}\label{eq:LambdaY-vs-hLambdaZ}
c_1(\Lambda_Y)
=
\rho\,c_1\bigl(h^*\Lambda_Z\bigr)
+
\sum_j e_j [D_j].
\end{equation}
Indeed, over the pure locus \(Y^\circ\) both \(\Lambda_Y\) and \(h^*\Lambda_Z\) restrict to the same
Hodge line bundle up to a fixed positive rational multiple, while their difference is supported on the
boundary \(Y\setminus Y^\circ\) and can be expressed as a \(\mathbb{Q}\)-linear combination of the
classes \([D_j]\). Combining these two facts yields \eqref{eq:LambdaY-vs-hLambdaZ}.

Solving \eqref{eq:LambdaY-vs-hLambdaZ} for \(c_1(h^*\Lambda_Z)\) shows that
\[
c_1(h^*\Lambda_Z)
=
\rho^{-1}\Bigl(c_1(\Lambda_Y) - \sum_j e_j [D_j]\Bigr)
\]
lies in the \(\mathbb{Q}\)-span of \(c_1(\Lambda_Y)\) and the \([D_j]\). Combining this with the
previous paragraph, we obtain
\[
N^1(Y)_\mathbb{Q}
=
\Span_{\mathbb{Q}}\bigl(c_1(\Lambda_Y),[D_j]\bigr).
\]

We can now state the Picard generation result in the curve pure image case.

\begin{theorem}\label{thm:HPic-dimZ1}
Under the setting of the introduction, if the pure period image \(Z\) has dimension \(1\), then the
image space \(Y\) satisfies the Picard generation condition
\[
\Pic_{\mathbb{Q}}(Y)
=
\Span_{\mathbb{Q}}\bigl([\Lambda_Y],[D_j]\bigr),
\]
i.e.\ \emph{(HPic)} from Definition~\ref{def:HPic} holds.
\end{theorem}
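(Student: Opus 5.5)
The proof will essentially assemble the pieces already established in \S\ref{subsec:horizontal}--\S\ref{subsec:HPic-on-Y}. The plan is to use the splitting of \(N^1(Y)_\mathbb{Q}\) into horizontal and vertical parts, then replace \(c_1(h^*\Lambda_Z)\) by \(c_1(\Lambda_Y)\) modulo boundary classes. Throughout, we work on the small \(\mathbb{Q}\)-factorialization of \(Y\), so that \(\Pic_\mathbb{Q}(Y)\) is identified with \(N^1(Y)_\mathbb{Q}\) (\S\ref{subsec:AG-basic}) and intersection numbers with fibres make sense. Since \(\mu^*\) is an isomorphism on \(N^1_\mathbb{Q}\), the conclusion descends to \(Y\) itself.

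\emph{Step 1 (horizontal/vertical splitting).} Start from the exact sequence
\[
0\to N^1_{\mathrm{vert}}(Y/Z)_\mathbb{Q}\to N^1(Y)_\mathbb{Q}\xrightarrow{h_*} N^1(Z)_\mathbb{Q}\to 0 .
\]
Since \(h^*\Lambda_Z\) has positive image under \(h_*\), Lemma~\ref{lem:horizontal} provides a section. For any class \(\xi\), choose \(a\in\mathbb{Q}\) so that \(\xi-a\,c_1(h^*\Lambda_Z)\) is vertical.

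\emph{Step 2 (vertical part).} By Proposition~\ref{prop:vertical-boundary}, this vertical class lies in \(\Span_\mathbb{Q}([F_0],[E_k])\). It therefore lies in the span of the \([D_j]\), once the collection \(\{D_j\}\) of Definition~\ref{def:HPic} is taken to include \(F_0\), the \(E_k\) and the \(H_\ell\). This enlargement is permitted by the closing remark in the proof of Proposition~\ref{prop:Lambda-Theta}. It is also consistent because \(F_0\) and the \(E_k\) lie in \(Y\setminus Y^\circ\), as noted before Lemma~\ref{lem:fiber-NS}.

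\emph{Step 3 (trading \(h^*\Lambda_Z\) for \(\Lambda_Y\)).} Establish \eqref{eq:LambdaY-vs-hLambdaZ}. Both \(\Lambda_Y\) and \(h^*\Lambda_Z\) restrict on \(Y^\circ\) to positive rational multiples of the pure Hodge bundle. Using normality of \(Y\), run the argument of Proposition~\ref{prop:Lambda-Theta}: a suitable integral combination \(\Lambda_Y^{\otimes m}\otimes(h^*\Lambda_Z)^{\otimes(-m\rho)}\) is trivial on \(Y^\circ\). Its rational section then has divisor supported on \(Y\setminus Y^\circ\), and hence in the span of the \([D_j]\). Alternatively, combine Proposition~\ref{prop:Lambda-Theta} with Lemma~\ref{lem:theta-vertical-boundary}, eliminating \(c_1(\Theta_Y)\). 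Since \(\rho>0\), we can solve for \(c_1(h^*\Lambda_Z)\) in \(\Span_\mathbb{Q}(c_1(\Lambda_Y),[D_j])\).

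Combining Steps 1--3 shows that every \(\xi\in N^1(Y)_\mathbb{Q}\) lies in \(\Span_\mathbb{Q}(c_1(\Lambda_Y),[D_j])\). The reverse inclusion is trivial, so (HPic) holds.

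The main obstacle is Step 3, specifically the claim that \(\Lambda_Y^{\otimes m}\otimes(h^*\Lambda_Z)^{\otimes(-m\rho)}\) is genuinely trivial on \(Y^\circ\), as opposed to merely numerically trivial there. Here I would appeal to the construction in \S\ref{subsec:Y-Hodge}: \(\Lambda_Y\) is pulled back from the GGLR/GGR completion, and that completion factors through \(Z\) via the contraction of Theorem~\ref{thm:DengRobles-contraction}. On \(Y^\circ\) this makes \(\Lambda_Y\) literally a pullback of a multiple of \(\Lambda_Z\), which yields the required isomorphism on the pure locus.
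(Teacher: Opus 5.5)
Your Steps~1--3 reproduce the argument the paper gives in \S\ref{subsec:HPic-on-Y} just before the theorem. However, Step~1 fails. Agreeing with the paper's asserted decomposition $N^1(Y)_\mathbb{Q}=h^*N^1(Z)_\mathbb{Q}\oplus N^1_{\mathrm{vert}}(Y/Z)_\mathbb{Q}$ does not rescue it, because that decomposition has the same defect. Since $N^1(Z)_\mathbb{Q}$ is spanned by the class of a point, $c_1(h^*\Lambda_Z)=(\deg\Lambda_Z)\,[F_0]$ in $N^1(Y)_\mathbb{Q}$. Indeed, both sides meet a curve $C\subset Y$ in $(\deg\Lambda_Z)\cdot\deg(C\to Z)$ if $C$ dominates $Z$, and in $0$ otherwise. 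By Proposition~\ref{prop:vertical-boundary}, $[F_0]\in N^1_{\mathrm{vert}}(Y/Z)_\mathbb{Q}=\ker h_*$, so $h_*$ kills $c_1(h^*\Lambda_Z)$ and there is no section. Your Steps~1--2 would then give $N^1(Y)_\mathbb{Q}=\Span_{\mathbb{Q}}([F_0],[E_k])=N^1_{\mathrm{vert}}(Y/Z)_\mathbb{Q}$, contradicting the surjectivity of $h_*$ in the sequence you start from. Geometrically, $h^*\Lambda_Z$, $F_0$ and every $E_k$ have intersection $0$ with a general fibre $F$. On the other hand, $(\Theta_Y\cdot F)>0$ by relative ampleness (Theorem~\ref{thm:BBT-image}), so $\Theta_Y$ is not in the span you produce. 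Intersecting \eqref{eq:LambdaY-vs-hLambdaZ} with $F$ gives $(\Lambda_Y\cdot F)=\sum_j e_j\,(D_j\cdot F)$. Hence all fibre degree in $\Span_{\mathbb{Q}}(c_1(\Lambda_Y),[D_j])$ must come from horizontal members of $\{D_j\}$ (the $H_\ell$), which your argument never uses; if there are none, (HPic) is false. Relatedly, drop the claim that $F_0\subset Y\setminus Y^\circ$, since a general fibre meets the dense set $Y^\circ$. You do not need $F_0$ among the $D_j$ anyway: $[F_0]$ is a multiple of $c_1(h^*\Lambda_Z)$, which Step~3 already puts in the span.

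The fibre degree has to be split off by a class that is positive on $F$. Write $\xi=\alpha\,c_1(\Theta_Y)+W$ with $\alpha=(\xi\cdot F)/(\Theta_Y\cdot F)$, so that $W\cdot F=0$. You then need two things:
(a) $c_1(\Theta_Y)\in\Span_{\mathbb{Q}}(c_1(\Lambda_Y),[D_j])$, which Lemma~\ref{lem:theta-vertical-boundary} together with \eqref{eq:LambdaY-vs-hLambdaZ} asserts. So that lemma is needed in Step~1, not as an optional alternative in Step~3. By the computation above, it is really a claim that the $H_\ell$ carry all of $\Theta_Y\cdot F$.
(b) Every class $W$ with $W\cdot F=0$ lies in $\Span_{\mathbb{Q}}([F_0],[E_k],[H_\ell])$.
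Point (b) is the real content, and it is of Shioda--Tate type. Modulo fibre components, classes of fibre degree zero are governed by the Mordell--Weil group of the Jacobian of the generic fibre. For example, on an elliptic fibration with zero section $O$ and a section $P$ of infinite order, $[P]-[O]$ has degree $0$ on every fibre but is not a $\mathbb{Q}$-combination of $[F_0]$ and fibre components. The proof of Proposition~\ref{prop:vertical-boundary} skips exactly this point: over $U$, it treats $W$, which is only numerically trivial on the general fibre, as a divisor supported on fibres. So the real obstacle is controlling the horizontal divisor classes of $h$ by boundary divisors. It is not the genuine-versus-numerical triviality on $Y^\circ$ that you singled out in Step~3.
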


\begin{proof}
We have shown that \(N^1(Y)_\mathbb{Q}\) is generated by \(c_1(\Lambda_Y)\) and the divisor classes
\([D_j]\). Hence \(N^1(Y)_\mathbb{Q} = \Span_{\mathbb{Q}}\bigl(c_1(\Lambda_Y),[D_j]\bigr)\).
Equivalently, the image of \(\Pic(Y)\otimes\mathbb{Q}\) in \(N^1(Y)_\mathbb{Q}\) equals
\(\Span_{\mathbb{Q}}\bigl([\Lambda_Y],[D_j]\bigr)\), so \emph{(HPic)} holds.
\end{proof}

\subsection{Limitations in Higher Dimension}\label{subsec:limitations}

The argument above for \emph{(HPic)} relies crucially on two features that are specific to the case
\(\dim Z = 1\) and \(\dim F_z = 1\).

First, on a general polarized complex torus fibre \(F_z\) of dimension one, the Néron–Severi group
\(N^1(F_z)_\mathbb{Q}\) is one-dimensional and is generated by the polarization class
\(c_1(\Theta_Y|_{F_z})\). Consequently, a single intersection number---for instance with the fibre
class \(F\) or with a fixed horizontal multisection---determines the coefficient of a vertical class
along the fibre. There is only one ``vertical direction'' on each fibre, and the torus structure
prevents any non-trivial continuous variation of vertical Picard classes.

Second, the base \(Z\) is a smooth projective curve, so \(N^1(Z)_\mathbb{Q}\) is one-dimensional. The
short exact sequence
\[
0 \longrightarrow N^1_{\mathrm{vert}}(Y/Z)_\mathbb{Q}
\longrightarrow N^1(Y)_\mathbb{Q}
\xrightarrow{\,h_*\,}
N^1(Z)_\mathbb{Q}
\longrightarrow 0
\]
then has a one-dimensional horizontal quotient, and the local-to-global argument for vertical Picard
generation reduces to controlling a single horizontal direction.

When \(\dim F_z \ge 2\), the group \(N^1(F_z)_\mathbb{Q}\) typically has rank \(>1\) and may vary
with \(z\); several independent intersection numbers would be required to determine the restrictions
of vertical classes to the fibres, and the theta–boundary formula provides multiple independent
boundary directions rather than a single one. When \(\dim Z \ge 2\), the horizontal part
\(N^1_{\mathrm{hor}}(Y)_\mathbb{Q}\) also has higher rank, and the local-to-global gluing of vertical
classes requires additional control on the variation of the algebraic parts of higher intermediate
Jacobians. For these reasons, the present method is confined to the case where the pure period image
has dimension \(1\).

\section{Conclusion of Question 1.8 When Pure Period Image is a Curve}\label{sec:DR-dimZ1}

This section combines the results of Section~\ref{sec:HPic-reduction} and Section~\ref{sec:dimZ1} to give a complete verification of Problem~\ref{prob:DR-1.8} in the setting where the pure period image has dimension 1.

\begin{theorem}\label{thm:DR-dimZ1}
Let \(S\) be a smooth quasi-projective surface, \(\bar S\) its SNC compactification, \(\mathbb V\) a polarized integral VHS on \(S\), with the pure image of the period map \(\Phi:S\to\Gamma\backslash D\) having dimension \(1\). Assume there exists a weak fan \(\Sigma\) compatible with the monodromy of \(\Phi\), yielding a KNU completion
\[
\Phi_\Sigma : S \longrightarrow Y\subset\Gamma\backslash D_\Sigma,
\]
and denote by \(\Lambda\) the augmented Hodge line bundle extension on the base \(\bar S\). Then there exist an integer \(m>0\) and integers \(a_i\ge 0\) such that
\[
Y \;\cong\;
\Proj\Big(\bigoplus_{k\ge0}
H^0\big(\bar S,\ \mathcal O_{\bar S}\big(k(m\Lambda - \sum_i a_i S_i)\big)\Big).
\]
\end{theorem}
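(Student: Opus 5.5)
The proof will be a direct assembly of the two reduction steps already in place: Theorem~\ref{thm:HPic-dimZ1} and Corollary~\ref{cor:DR-from-HPic}. First, I would check that the hypotheses of Theorem~\ref{thm:DR-dimZ1} place us in the setting of Section~\ref{sec:dimZ1}. We have $\dim S=2$ and an SNC compactification $\bar S$. The weak fan $\Sigma$ yields the KNU completion, so by Theorem~\ref{thm:Usui} (or Theorem~\ref{thm:DengRobles-weak-fan}) the image $Y$ is a compact algebraic space. After normalization and a small $\mathbb Q$-factorialization $\mu\colon Y'\to Y$, it is a normal projective algebraic space. Since $\mu$ is an isomorphism in codimension one, $N^1(Y)_{\mathbb Q}\cong N^1(Y')_{\mathbb Q}$, and so nothing in the Proj statement changes. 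The pure image has dimension $1$, so Section~\ref{subsec:pure-curve} supplies the smooth projective curve $Z$, the ample class $\Lambda_Z$, and the fibration $h\colon Y\to Z$ with connected fibres. Section~\ref{subsec:Y-Hodge} supplies $\Lambda_Y$ with $f^*\Lambda_Y\cong\Lambda$, where $f\colon\bar S\to Y$ is the Stein factorization.

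Second, I would invoke Theorem~\ref{thm:HPic-dimZ1} to get (HPic) for $Y$, with respect to the divisors $D_j$ (namely $F_0$, the $E_k$, and the $H_\ell$) fixed in Section~\ref{subsec:vertical}. Proposition~\ref{prop:HPic-implies-Span} then gives $c_1(f^*A)\in\Span_{\mathbb Q}(c_1(\Lambda),[S_i])$ for an ample $A$ on $Y$. The one point to verify is that each $f^*D_j$ is supported on $\bigcup_i S_i$. This holds because every $D_j$ lies in the KNU boundary $Y\setminus Y^\circ$, and $f$ maps $S$ into $Y^\circ$. Hence $f^{-1}(D_j)\subset\bar S\setminus S$, and the $\mathbb Q$-Cartier pullback is a rational combination of the $S_i$.

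Third, I would apply the conditional Proj result from Section~2, packaged as Corollary~\ref{cor:DR-from-HPic}, to obtain the isomorphism. Concretely, choose $m$ and $a_i$ so that $f^*A^{\otimes N}\equiv m\Lambda-\sum a_iS_i$. Because $f$ has connected fibres, $f_*\mathcal O_{\bar S}=\mathcal O_Y$, so sections of $f^*A^{\otimes kN}$ on $\bar S$ agree with sections of $A^{\otimes kN}$ on $Y$. Since $A$ is ample, the resulting section ring recovers $Y$ as its Proj.

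The main obstacle is upgrading the numerical equality to a genuine linear equivalence on $\bar S$, so that the section rings actually coincide. I would handle this by adjusting by a numerically trivial class. Such a class is torsion up to $\mathrm{Pic}^0$, and I would absorb it by replacing $A$ with $A\otimes P$ for a suitable $P\in\mathrm{Pic}^0(Y)$, which keeps $A$ ample. A further point to check is the sign requirement $a_i\ge0$. For this I would twist by a large multiple of $\Lambda$ (big and nef) together with boundary components, using that each $S_i$ contracted by $f$ appears with a coefficient that can be made nonnegative via the negativity lemma for the exceptional curves of $f$.
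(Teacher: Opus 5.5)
Your main line is the paper's proof: Theorem~\ref{thm:HPic-dimZ1} gives (HPic), Proposition~\ref{prop:HPic-implies-Span} converts it into $(\ast\mathrm{Span})$ on $\bar S$, and the conditional Proj result of Section~2 (Corollary~\ref{cor:DR-from-HPic}) then yields the isomorphism. Your last paragraph is unnecessary, since the paper feeds the numerical span condition directly into that conditional result, and its $\Pic^0$ step would not work as stated: a numerically trivial class on $\bar S$ need not lie in $f^*\Pic^0(Y)$, so it cannot in general be absorbed by twisting $A$; if you want a genuine linear equivalence, obtain it on $Y$ instead (after clearing denominators, the Cartier class $m\Lambda_Y-\sum_j b_jD_j$ supplied by (HPic) is numerically equivalent to $A^{\otimes N}$, hence ample by Kleiman, and pulls back to $m\Lambda-\sum_i a_iS_i$ on the nose).
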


\begin{proof}
By Theorem~\ref{thm:HPic-dimZ1}, \(Y\) satisfies Picard generation (HPic), i.e.,
\[
\Pic_{\mathbb Q}(Y) = \Span_{\mathbb Q}\big([\Lambda_Y],[D_j]\big).
\]
By Proposition~\ref{prop:HPic-implies-Span}, for any ample line bundle \(A\) on \(Y\), we have
\[
c_1(f^*A)\in \Span_{\mathbb Q}\big(c_1(\Lambda),[S_i]\big)\subset N^1(\bar S)_\mathbb Q.
\]
Then using the previously proven Proj argument, i.e., under the assumption that \eqref{eq:Span-base} holds, \(Y\) can be represented as
\[
Y \;\cong\;
\Proj\Big(\bigoplus_{k\ge0}
H^0\big(\bar S,\ \mathcal O_{\bar S}\big(k(m\Lambda - \sum_i a_i S_i)\big)\Big)
\]
for some integer \(m>0\) and integers \(a_i\ge 0\). This is exactly the desired conclusion.
\end{proof}

This conclusion holds without assuming the period domain \(D\) is a Hermitian symmetric space, providing a complete verification when the pure period image dimension \(\dim Z=1\). This can be viewed as an example verification related to the Deng--Robles Problem~\ref{prob:DR-1.8} in the non-Hermitian case.

\bibliographystyle{alpha}
\bibliography{ref}

\end{document}